\def\doctype{}
\newcommand{\bluebar}[3]{\filldraw[blue] (#1-0.2,#2)--(#1+0.2,#2)--(#1+0.2,#2+#3)--(#1-0.2,#2+#3);}
\newcommand{\pinkbar}[3]{\filldraw[pink] (#1-0.2,#2)--(#1+0.2,#2)--(#1+0.2,#2+#3)--(#1-0.2,#2+#3);}
\newcommand\lam{\lambda}
\newcommand\Z{\mathbb{Z}}
\newcommand{\cF}{\mathcal{F}}
\newcommand{\ap}{\mathrm{AP}}
\newcommand{\comment}[1]{}
\numberwithin{equation}{section}
\let\oldsection\section
\newcommand\boldsection[1]{\oldsection{\bf #1}}
\newcommand\starsection[1]{\oldsection*{\bf #1}}
\renewcommand\section{\@ifstar\starsection\boldsection}
\newtheoremstyle{theorem}
  {12pt}		  
  {0pt}  
  {\sl}  
  {\parindent}     
  {\bf}  
  {. }    
  { }    
  {}     
\theoremstyle{theorem}
\newtheorem{thm}{Theorem}[section]  
\newtheorem{lemma}[thm]{Lemma}     
\newtheorem{cor}[thm]{Corollary}
\newtheorem{conj}[thm]{Conjecture}
\newtheorem{prop}[thm]{Proposition}
\newtheoremstyle{definition}
  {12pt}		  
  {0pt}  
  {}  
  {\parindent}     
  {\bf}  
  {. }    
  { }    
  {}     
\theoremstyle{definition}
\newtheorem{ex}[thm]{Example}
\newcommand\rk{{\sc Remark.} }
\renewcommand{\proofname}{Proof}
\renewenvironment{proof}[1][\proofname]{\par
  \pushQED{\qed}%
  \normalfont \partopsep=\z@skip \topsep=\z@skip
  \trivlist
  \item[\hskip\labelsep
        \scshape
    #1\@addpunct{.}]\ignorespaces
}{%
  \popQED\endtrivlist\@endpefalse
}
\renewcommand*\@maketitle{%
  \normalfont\normalsize
  \@adminfootnotes
  \@mkboth{\@nx\shortauthors}{\@nx\shorttitle}%
  \global\topskip42\p@\relax 
  \@settitle
  \ifx\@empty\authors \else {\vskip 1em
\vtop{\centering\shortauthors\@@par}} \fi
  \ifx\@empty\@date \else {\vskip 1em \vtop{\centering\@date\@@par}}\fi 
  \ifx\@empty\@dedicatory
  \else
    \baselineskip18\p@
    \vtop{\centering{\footnotesize\itshape\@dedicatory\@@par}%
      \global\dimen@i\prevdepth}\prevdepth\dimen@i
  \fi
  \@setabstract
  \normalsize
  \if@titlepage
    \newpage
  \else
    \dimen@34\p@ \advance\dimen@-\baselineskip
    \vskip\dimen@\relax
  \fi
} 
\renewcommand*\@adminfootnotes{%
  \let\@makefnmark\relax  \let\@thefnmark\relax
  \ifx\@empty\@subjclass\else \@footnotetext{\@setsubjclass}\fi
  \ifx\@empty\@keywords\else \@footnotetext{\@setkeywords}\fi
  \ifx\@empty\thankses\else \@footnotetext{%
    \def\par{\let\par\@par}\@setthanks}%
  \fi
\thispagestyle{titlepage}
}
\begin{document}

\title[Families of modular APs]{Families of modular arithmetic progressions with an interval of distance multiplicities}

\author{Peter J.~Dukes}
\address{\rm 
Mathematics and Statistics,
University of Victoria, Victoria, Canada
}
\email{dukes@uvic.ca}

\author{Tao Gaede}
\email{taogaede@uvic.ca}

\date{\today}

\begin{abstract}
Given a family $\mathcal{F}=\{A_1,\dots,A_s\}$ of subsets of $\Z_n$, define $\Delta \cF$ to be the multiset of all (cyclic) distances dist$(x,y)$, where $\{x,y\} \subset A_i$, $x \neq y$, for some $i=1,\dots,s$.  Taking inspiration from a Euclidean distance problem of Erd\H{o}s, we say that $\mathcal{F}$ is \emph{Erd\H{o}s-deep} if the multiplicities of distances that occur in $\Delta \cF$ are precisely $1,2,\dots,k-1$ for some integer $k$.  In the case $s=1$, it is known that a modular arithmetic progression in $\Z_n$ achieves this property (under mild conditions); conversely, APs are the only such sets, except for one sporadic case when $n=6$.  Here, we consider in detail the case $s=2$.  In particular, we classify Erd\H{o}s-deep pairs $\{A_1,A_2\}$ when each  $A_i$ is an arithmetic progression in $\Z_n$.  We also give a construction of a much wider class of Erd\H{o}s-deep families $\{A_1,\dots,A_s\}$ when $s$ is a square integer.
\end{abstract}

\subjclass[2020]{05B07 (primary); 05B05, 05B10 (secondary)}
\maketitle
\hrule

\section{Introduction}
\label{intro}

There is a rich history of problems based on realizing a distance multiset with given properties.
For example, in 1946 Erd\H{o}s asked \cite{ErdosDD} how many
distinct distances are determined by $k$ points in the plane.  A conjectured lower bound of $\Omega(k/\sqrt{\log k})$ distinct distances, attained by vertices of a square grid, has attracted considerable attention, \cite{GIS}.  A celebrated bound of $\Omega(k/\log k)$ was found by Guth and Katz in \cite{GK}.

A separate question of Erd\H{o}s \cite{ErdosDeep1,ErdosDeep2} concerns the placement of $k$ points in the plane, no three on a line and no four on a circle, such that the multiplicities of the $\binom{k}{2}$ pairwise distances are $1,2,\dots,k-1$.  For instance, a parallelogram with side lengths $1$ and $\sqrt{2}$ at angle $45^\circ$ has diagonal lengths $1,\sqrt{5}$.  So, distance $d_i$ occurs exactly $i$ times, where in this case $(d_1,d_2,d_3)=(\sqrt{5},\sqrt{2},1)$.  Solutions have been found for other small values of $k$, most notably an $8$ point configuration of Palasti \cite{Palasti}.  However, it is conjectured that no placement of points with this property is possible for $k \ge k_0$. Perhaps $k_0=9$; in \cite{BGMMPS} a large number of plausible nine-point configurations were ruled out by computer search.

If we were to allow collinear points, then an arithmetic progression of $k$ points on a line has the desired distance multiplicities.  Likewise, if we drop the restriction that no four points lie on a circle, it is possible to take an (angular) arithmetic progression of $k$ points on a semicircle.  For this reason, a set of $k$ points with distance multiplicities $1,2,\dots,k-1$ is also called a \emph{crescent configuration}, \cite{BGMMPS}.

Problems concerning distances and their multiplicities can be posed in different metric spaces, including discrete settings.  For instance, the classical problem on distinct distances has been studied in finite vector spaces, \cite{IR}.

Our context of interest is $\Z_n$, the ring of integers modulo $n$, with the usual norm $|x|_n:=
\min(x,-x)$, where each of $\pm x$ is reduced in $\{0,1,\dots,n-1\}$. We measure distance in $\Z_n$ as $\mathrm{dist}(x,y)=|x-y|_n$.  For a $k$-element set $A \subseteq \Z_n$, we let $\Delta A$ denote the multiset of $\binom{k}{2}$ pairwise distances that occur in $A$.  That is,
$\Delta A = \{ \mathrm{dist}(x,y) : \{x,y\} \subseteq A, x \neq y\}$.
We are interested here in the {\bf multiplicities} with which elements occur in $\Delta A$.  There is some precedent for studying distance multiplicities.  An $(n,k,\lambda)$-\emph{difference set} is a $k$-element set $K \subseteq \Z_n$, say $K =\{x_1,\dots,x_k\}$, such that every nonzero element of $\Z_n$ occurs exactly $\lambda$ times as a difference $x_i-x_j$, $1 \le i,j \le k$. When $n$ is odd, this is equivalent to $\Delta K$ attaining every value from $1$ to $(n-1)/2$ with the same multiplicity $\lam$.  For example, $\{1,3,4,5,9\} \subseteq \Z_{11}$ is an $(11,5,2)$-difference set, which we note in passing is related to many combinatorial objects, including Steiner systems, Mathieu groups, and Golay codes; see \cite{biplane}.

In a different direction, 
a set $E \subseteq \Z_n$ is \emph{deep} if the multiplicities of distances in $\Delta E$ are all distinct; see Toussaint's book \cite{Toussaint}, which offers an excellent mathematical survey of musical rhythms.  It is argued there that deep sets are musically interesting.  Adding extra structure, and referencing the consecutive multiplicity problem from \cite{ErdosDeep1,ErdosDeep2} above, the authors of \cite{DistGeom} define a $k$-set $E \subseteq \Z_n$ to be \emph{Erd\H{o}s-deep} if the distances that occur in $\Delta E$ have multiplicities $1,2,\dots,k-1$.  That is,  $E$ is Erd\H{o}s-deep if and only if, for every $i \in \{1,2,\dots,k-1\}$, there is exactly one positive number $d_i$ such that $$|\{\{x,y\} \subseteq E : \mathrm{dist}(x,y)=d_i\}| = i.$$
Along with some musical motivation, the authors give a classification of Erd\H{o}s-deep sets.  Arithmetic progressions (mod $n$) are nearly the whole story.

\begin{thm}[see \cite{DistGeom}]
Let $E \subseteq \Z_n$ be an Erd\H{o}s-deep set.  Then $E$ is either an arithmetic progression with generator $g$ satisfying  $|E| \le \lfloor n/2\gcd(n,g) \rfloor +1$ or $n=6$ and $E$ is a translate of $\{0,1,2,4\}$.
\end{thm}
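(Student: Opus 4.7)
The plan is to prove both directions of the characterization. The ``if'' direction amounts to a routine calculation: for the AP $E = \{a + jg : 0 \leq j < k\}$, the pair $\{a+ig,\,a+i'g\}$ contributes distance $|(i-i')g|_n$, so each distance $|jg|_n$ arises exactly $k - j$ times for $j = 1,\ldots,k-1$; the size condition $k \leq \lfloor n/(2\gcd(n,g))\rfloor + 1$ is precisely what makes the values $|g|_n, |2g|_n, \ldots, |(k-1)g|_n$ pairwise distinct. The sporadic case is a direct verification.

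For the converse, suppose $E$ is Erd\H{o}s-deep with $|E| = k$, and let $d^* \in \Delta E$ be the unique distance appearing with multiplicity $k - 1$. The central object is the graph $G$ on $E$ whose edges are the pairs at distance $d^*$. Setting $m = n/\gcd(n, d^*)$, the ambient ``distance-$d^*$'' graph on $\Z_n$ is either a disjoint union of $\gcd(n, d^*)$ cycles of length $m$ (when $2d^* \neq n$) or a perfect matching (when $2d^* = n$, which immediately forces $k \leq 2$). Assuming $m \geq 3$, the induced $G$ has maximum degree at most $2$ and exactly $k - 1$ edges, so a standard component count forces $G$ to consist of exactly one path component together with some number (possibly zero) of cycle components; each cycle component, as an induced cycle inside an ambient cycle, must be an entire coset of $\langle d^* \rangle$.

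When $G$ has no cycle component, $G$ is a Hamiltonian path sitting contiguously inside a single coset, so $E$ is the AP $\{a, a+d^*, \ldots, a+(k-1)d^*\}$, and the Erd\H{o}s-deep hypothesis translates directly into the size bound via distinctness of the $|jd^*|_n$. The substantive case is when $G$ has $c \geq 1$ cycle components $C_1, \ldots, C_c$ (full cosets of $\langle d^* \rangle$ of size $m$) together with a path $T$ of $k - cm \geq 1$ vertices in another coset. Here I would tally how pairs at each possible distance can arise from three sources: within a single coset $C_i$ (rigidly $m$ pairs at every ``step'' $|jd^*|_n$ for $1 \leq j < m/2$, with $m/2$ pairs at $j = m/2$), within $T$, and across $C_i$--$T$ or $C_i$--$C_j$ pairs. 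The Erd\H{o}s-deep requirement that the multiplicities be exactly $\{1, 2, \ldots, k-1\}$ is so restrictive that for $m \geq 5$ the coset-internal contributions at steps $1$ and $2$ already collide in an unfixable way, and the cases $m = 4$ and $m = 3$ with $c \geq 2$ or $|T| \geq 2$ can be eliminated by similar pigeonholing. The only surviving configuration is $m = 3$, $c = 1$, $|T| = 1$; writing $C_1 = \{0, d^*, 2d^*\}$ (after translation) and $T = \{y\}$, the three distances from $y$ to $C_1$ must collectively realize the remaining multiplicities $1$ and $2$, forcing two of them to coincide. This yields a Diophantine condition of the shape $2y \equiv d^* \pmod{n}$, which together with $y \notin \langle d^*\rangle$ leaves only the $\Z_6$ sporadic $\{0, 1, 2, 4\}$.

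The main obstacle is the case analysis in the cycle-component branch. Each individual subcase is small and concrete, but the bookkeeping across $(m, c, |T|)$ triples and the three sources of pairs is delicate; the arguments must repeatedly invoke both the upper cap $k - 1$ on any single multiplicity and the requirement that all multiplicities be distinct elements of $\{1, \ldots, k-1\}$ to rule out the numerous near-misses. Everything else (the AP classification and the size bound) drops out quickly once this sporadic exception is dispatched.
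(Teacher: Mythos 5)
First, a point of reference: the paper does not prove this theorem at all --- it is quoted from \cite{DistGeom} --- so there is no in-paper argument to compare yours against. Judged on its own, your skeleton (isolate the unique distance $d^*$ of multiplicity $k-1$, form the graph $G$ of $d^*$-pairs inside the ambient circulant union of $m$-cycles, and use the edge count $k-1$ together with maximum degree $2$ to force exactly one path component plus full-coset cycle components) is sound and is essentially the route of the cited source; the ``if'' direction and the pure-path case are fine.

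The genuine gap is in the terminal step of the cycle branch. In the configuration $m=3$, $c=1$, $|T|=1$, translate so that $C_1=\{0,n/3,2n/3\}$ (so $3\mid n$ and $d^*=n/3$ already has multiplicity $3=k-1$ from the triangle) and $T=\{y\}$. Your requirement that the three cross distances $|y|_n,\ |y-n/3|_n,\ |y-2n/3|_n$ realize multiplicities $2$ and $1$ gives one of $2y\equiv n/3$, $2y\equiv 2n/3$, or $2y\equiv 0\pmod n$, and these have admissible solutions for \emph{every} $n\equiv 0\pmod 6$, not only $n=6$: taking $y=n/6$ yields $E=\{0,n/6,n/3,2n/3\}$ with $\Delta E=\{(n/3)^3,(n/6)^2,(n/2)^1\}$. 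For $n=12$ this is $\{0,2,4,8\}$, which is Erd\H{o}s-deep but is not an arithmetic progression in $\Z_{12}$. So your analysis, carried out correctly, does not terminate at $n=6$; these surviving sets are exactly the ``scalings'' of $\{0,1,2,4\}\subseteq\Z_6$ that appear in the original statement in \cite{DistGeom} and are dropped from the restatement here, so the statement you were asked to prove is itself incomplete and cannot be reached by your (correct) method. Separately, the heavy lifting --- eliminating $m\ge 5$, $m=4$, and $m=3$ with $c\ge 2$ or $|T|\ge 2$ --- is asserted rather than argued: for $m\ge 5$ the step-$2$ distance $|2d^*|_n$ picks up multiplicity $cm+|T|-2=k-2$, which is a \emph{permitted} value, so ``steps $1$ and $2$ collide'' is not by itself a contradiction; closing those cases needs a finer count (for instance, when $|T|\ge 2$ the cross distances to a full coset occur in reflected pairs, giving a parity obstruction to realizing odd multiplicities).
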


One natural way to extend the preceding distance problems is to allow several sets, and consider distances within the same set.  For example, an $(n,k,\lambda)$-\emph{difference family} is a collection $\mathcal{F}$, such that each $K \in \mathcal{F}$ is a $k$-subset of $\Z_n$, and the `internal' differences $x-y$, $x,y \in K \in \mathcal{F}$ attain every nonzero value in $\Z_n$ with multiplicity $\lambda$.  In general, for a family $\cF =\{A_1,\dots,A_s\}$ of subsets of $\Z_n$ (of possibly different sizes), we define $\Delta \cF$ to be the multiset union of all $\Delta A_j$, $j=1,\dots,s$.

To generalize the Erd\H{o}s-deep problem for single sets, let us say that $\mathcal{F}$ is an \emph{Erd\H{o}s-deep family} if $\Delta \cF$ achieves precisely the  multiplicities $1,2,\dots,k-1$ for some $k$.

\begin{ex}
\label{ED-infinite}
For any $n \ge 8$, $\{\{0,1,2\},\{0,2,4\}\}$ is an Erd\H{o}s-deep family of subsets of $\Z_n$.  The internal distances within sets are $1,1,2,2,2,4$, realizing each multiplicity $1,2,3$ exactly once.  This family has the additional property that each constituent set is an arithmetic progression, and hence an Erd\H{o}s-deep set.  In fact, the family is also Erd\H{o}s-deep when interpreted in $\Z_7$, the only minor adjustment being that distance 4 becomes 3.  However, the condition fails over $\Z_6$, since in that case distance $2$ occurs four times.  
\end{ex}

Our main result is a classification of Erd\H{o}s-deep families of two modular arithmetic progressions.
Define $\ap_n(g,k):=\{0,g,2g,\dots,(k-1)g\} \subset \Z_n$, which we informally call a modular $k$-term AP.

\begin{thm}
\label{main}
Suppose $\{\ap_n(g_1,k_1),\ap_n(g_2,k_2)\}$ is an Erd\H{o}s-deep family in $\Z_n$ with  $k_1 \ge k_2 \ge 3$ and $\gcd(n,g_1,g_2)=1$.  Then either $k_1=k_2=3$, or $(n,k_1,k_2) \in \{ (13,6,4), (19,7,6), (31,11,9)\}$.
\end{thm}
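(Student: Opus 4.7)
The plan is to derive a necessary counting identity, invoke the single-set classification to pin down the structure of each AP, and then analyse the rigid combinatorics of how the two multiplicity staircases must interlock. Counting $|\Delta\cF|$ in two ways yields the necessary identity
\[\binom{k_1}{2}+\binom{k_2}{2}=\binom{k}{2},\]
equivalently $(2k_1-1)^2+(2k_2-1)^2=(2k-1)^2+1$. This Pell-like equation has infinitely many solutions besides the ones listed, e.g.\ $(k_1,k_2,k)=(10,7,12),(15,6,16),(21,7,22),(28,8,29),(36,9,37),\ldots$, so the proof must eliminate all but the stated triples.

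Next I would show each $\ap_n(g_i,k_i)$ is itself Erd\H{o}s-deep. If the internal distances of $A_i$ were not all distinct, some distance would carry multiplicity $>k_i-1$ in $\Delta A_i$ alone; since every target multiplicity in $\{1,\ldots,k-1\}$ occurs exactly once and the companion AP can contribute at most $k_{3-i}-1$ additional copies of any single distance, a short accounting of the high-end multiplicities rules this out. Theorem 1.1 then gives the staircase $k_i-1,k_i-2,\ldots,1$ on the $k_i-1$ distinct distances $|jg_i|_n$ from $A_i$, together with $k_i\le\lfloor n/2\gcd(n,g_i)\rfloor+1$. Using $\gcd(n,g_1,g_2)=1$ and swapping $A_1,A_2$ if necessary, I may normalise $g_1=1$, leaving only $n$ and $g_2$ as unknowns.

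I would then introduce \emph{merges}: relations $jg_1\equiv\pm j'g_2\pmod n$ with $1\le j\le k_1-1$, $1\le j'\le k_2-1$, each replacing two entries $k_1-j,k_2-j'$ of the combined multiset $\{1,\ldots,k_1-1\}\sqcup\{1,\ldots,k_2-1\}$ by their sum. Erd\H{o}s-deepness forces exactly $k_1+k_2-k-1$ merges transforming this multiset into $\{1,\ldots,k-1\}$. Since the top value $k-1$ exceeds $k_1-1$, it must arise from a merge, fixing the two multiplicities consumed; working top-down through $k-2,k-3,\ldots$ enumerates all feasible merging schemes for each Diophantine triple. Each scheme is a small system of equations $jg_1\equiv\pm j'g_2\pmod n$; with $g_1=1$ this determines $g_2\pmod n$ up to sign and collapses to a single divisibility condition on $n$.

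The main obstacle is eliminating the infinite family of Diophantine solutions with $k_2\ge 4$ other than the three sporadic triples: for each, I must show that every feasible merging scheme either produces a duplicate or overshoot in the target multiplicity profile, or yields an $n$ incompatible with the AP bounds $k_i\le\lfloor n/2\gcd(n,g_i)\rfloor+1$. For $(6,4,7),(7,6,9),(11,9,14)$ the surviving scheme pins $n$ to $13,19,31$ respectively, and can be realised explicitly (e.g.\ $g_2=3$ in $\Z_{13}$ and $g_2=4$ in $\Z_{19}$). For $k_1=k_2=3$ the unique merge collapses to $2g_1\equiv\pm g_2\pmod n$, satisfied for all $n\ge 8$, giving the infinite family of Example \ref{ED-infinite}.
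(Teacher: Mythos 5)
There is a genuine gap, and you have in fact named it yourself: ``the main obstacle is eliminating the infinite family of Diophantine solutions with $k_2\ge 4$.'' Your plan offers no mechanism for doing this. The counting identity $\binom{k_1}{2}+\binom{k_2}{2}=\binom{k}{2}$ admits infinitely many triples (as you correctly observe), and your proposed elimination --- enumerating feasible merging schemes ``for each'' triple and checking that each scheme fails --- is a separate finite computation for each of infinitely many triples. Without a uniform bound that caps $k_2$, the argument never terminates. This is precisely where the paper's work lies: it proves quantitative inequalities relating $n$ to the lengths, namely $n<6k_1$ (via hitting-sequence estimates for the intersection of $\ap_n(g_2,k_2)$ with the ball of radius $k_1$), $n\le 18k_2+36$ (via a bound on the number of small gaps $|xg_2|_n-|yg_2|_n$), and the lower bound $n\ge (k_2-t)^2/4$ (pigeonhole on the most frequent shared distance), where $t=k-k_1\le\frac37(k_2-1)$. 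Squeezing $\frac{4}{49}k_2^2\le n\le 18k_2+36$ forces $k_2\le 222$, after which a finite computer search finishes the classification. Your merging-scheme framework is a reasonable local reformulation of the Erd\H{o}s-deep condition, but it contains no global estimate playing the role of these bounds, so the infinite Diophantine family is never actually killed.

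A secondary weakness: your claim that each constituent AP must individually be Erd\H{o}s-deep (i.e., that the $k_i-1$ distances $|jg_i|_n$ are pairwise distinct) is asserted via ``a short accounting of the high-end multiplicities,'' but a collision $|jg_i|_n=|j'g_i|_n$ produces multiplicity $(k_i-j)+(k_i-j')\le 2k_i-3$ inside $\Delta A_i$, which need not exceed $k-1$, so no immediate contradiction arises and the claim would need a real proof. (It happens to hold in all the surviving examples, but the paper never needs it; its hitting-sequence analysis works directly with the multiset $\Delta A_2$.) Likewise ``the top value $k-1$ must arise from a merge, fixing the two multiplicities consumed'' is too strong, since $(k_1-j)+(k_2-j')=k-1$ generally has several solutions $(j,j')$; you do acknowledge branching, but this further inflates the per-triple case analysis you would need to control uniformly.
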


The next section sets up some notation and gives an outline of the proof of Theorem~\ref{main}, including a look at the three sporadic cases.  Then, Section~\ref{proof} covers various estimates and other details needed to complete the proof.  Very broadly, all but a finite set of parameters can be ruled out by elementary combinatorial estimates, and a computer search handles what remains.  Section~\ref{construction} gives a construction to show that Erd\H{o}s-deep families of APs with large modulus $n$ are abundant, in contrast with the main result, when the size of the family is a square integer.  We  conclude in Section~\ref{sec:discussion} with a brief discussion of some next steps for the research topic.

\section{Set-up and outline}

\subsection{Three sporadic cases}

Recall that Example~\ref{ED-infinite} gives an Erd\H{o}s-deep family $\{\{0,1,2\}$, $\{0,2,4\}\}$ 
$\subseteq \Z_n$ for each $n \ge 7$.  It is natural to refer to this as the `geometric' infinite family.  We next examine the three sporadic cases in which other Erd\H{o}s-deep pairs of APs can exist.

\begin{ex}
\label{edfam-13}
Let $n=13$ and consider $\cF=\{\ap_n(1,6),\ap_n(3,4)\} = \{\{0,1,2,3,4,5\},\{0,3,6,9\}\}$.  Let $A_1$ and $A_2$ be the two APs in the order given.  Using exponential notation to denote multiplicities, $\Delta A_1 = \{1^5, 2^4, 3^3, 4^2, 5^1\}$ and $\Delta A_2 = \{3^3, 6^2, 4^1\}$.
The multiset union of these is  
$$\Delta \cF = \{1^5,2^4,3^6,4^3, 5^1,6^2\},$$
and we see that the multiplicities are precisely the values $1,2, \dots,6$.
\end{ex}

\begin{ex}
\label{edfam-19}
Let $n=19$ and consider $\cF=\{\ap_n(1,7),\ap_n(4,6)\}$.  We have 
$$\Delta \cF = \{1^7, 2^5, 3^6, 4^8, 5^2, 6^1, 7^3, 8^4\},$$
with multiplicities  $1,2, \dots,8$.  
\end{ex}

\begin{ex}
\label{edfam-31}
Let $n=31$ and consider $\cF=\{\ap_n(1,11),\ap_n(13,9)\}$.  The distance multiplicities are $1,2, \dots, 13$.
\end{ex}

Figure~\ref{histograms} displays histograms of multiplicities for the three preceding examples.  Multiplicities of distances in the APs are shown in blue and red, respectively.
On visual inspection, the multiplicities combine to form an interval starting at 1.
\begin{figure}[htbp]
\begin{center}
\begin{tikzpicture}[scale=0.5]
\bluebar{1}{0}{5}
\bluebar{2}{0}{4}
\bluebar{3}{0}{3}
\bluebar{4}{0}{2}
\bluebar{5}{0}{1}
\pinkbar{3}{3}{3}
\pinkbar{6}{0}{2}
\pinkbar{4}{2}{1}
\draw (-1,0)--(7.2,0);
\draw (0,-1)--(0,6.5);
\foreach \a in {1,2,...,6}
\node at (\a,-0.5) {$\a$};
\end{tikzpicture}
\hspace{2cm}
\begin{tikzpicture}[xscale=0.5,yscale=0.4]
\bluebar{1}{0}{6}
\bluebar{2}{0}{5}
\bluebar{3}{0}{4}
\bluebar{4}{0}{3}
\bluebar{5}{0}{2}
\bluebar{6}{0}{1}
\pinkbar{4}{3}{5}
\pinkbar{8}{0}{4}
\pinkbar{7}{0}{3}
\pinkbar{3}{4}{2}
\pinkbar{1}{6}{1}
\draw (-1,0)--(9.2,0);
\draw (0,-1)--(0,8);
\foreach \a in {1,2,...,8}
\node at (\a,-0.6) {$\a$};
\end{tikzpicture}

\begin{tikzpicture}[xscale=0.5,yscale=0.3]
\bluebar{1}{0}{10}
\bluebar{2}{0}{9}
\bluebar{3}{0}{8}
\bluebar{4}{0}{7}
\bluebar{5}{0}{6}
\bluebar{6}{0}{5}
\bluebar{7}{0}{4}
\bluebar{8}{0}{3}
\bluebar{9}{0}{2}
\bluebar{10}{0}{1}
\pinkbar{13}{0}{8}
\pinkbar{5}{6}{7}
\pinkbar{8}{3}{6}
\pinkbar{10}{1}{5}
\pinkbar{3}{8}{4}
\pinkbar{15}{0}{3}
\pinkbar{2}{9}{2}
\pinkbar{11}{0}{1}
\draw (-1,0)--(15.5,0);
\draw (0,-1)--(0,13);
\foreach \a in {1,2,...,15}
\node at (\a,-0.6) {\small $\a$};
\end{tikzpicture}
\end{center}
\caption{Multiplicity histograms for the families in Examples~\ref{edfam-13}, \ref{edfam-19} and \ref{edfam-31}.}
\label{histograms}
\end{figure}
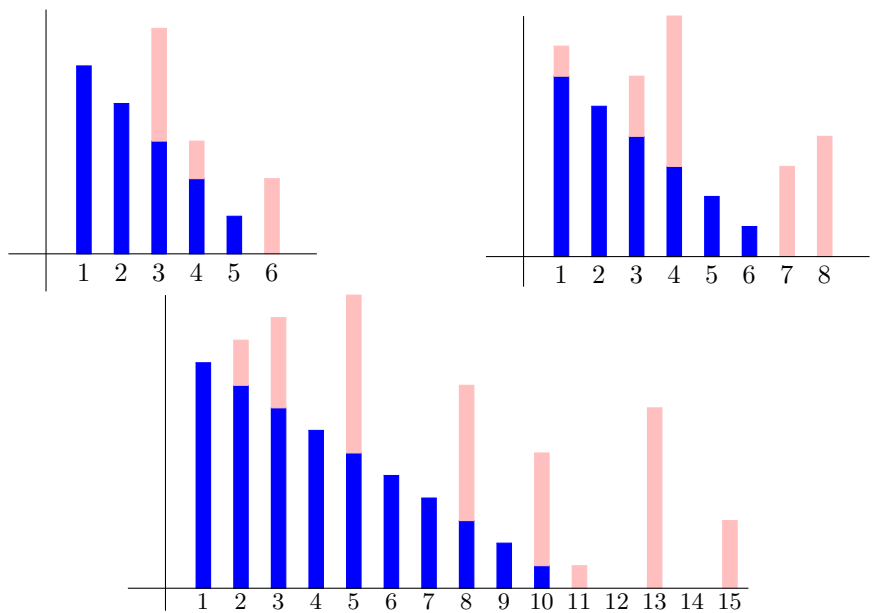

Our main result, Theorem~\ref{main}, argues that these three examples and the geometric infinite family in Example~\ref{ED-infinite} are the only Erd\H{o}s-deep pairs of APs, up to translation and scaling by units.

\subsection{A geometric interpretation}

Consider two light beams inside a circle, originating from a common source $O$ on the boundary. The beams begin with intensities $k_1,k_2$. Thereafter, each time a beam touches the boundary, its intensity decreases by one and it reflects in the usual way, continuing until its intensity reaches zero. The Erd\H{o}s-deep condition corresponds to a configuration of beams in which each value $1,2,\dots,k-1$ occurs as a total intensity at some distance from $O$. Figure~\ref{beams} illustrates this for the family from Example~\ref{edfam-19}. For visual effect, beams have been spread out by scaling generators by a common factor; this of course has no effect on the set of multiplicities.

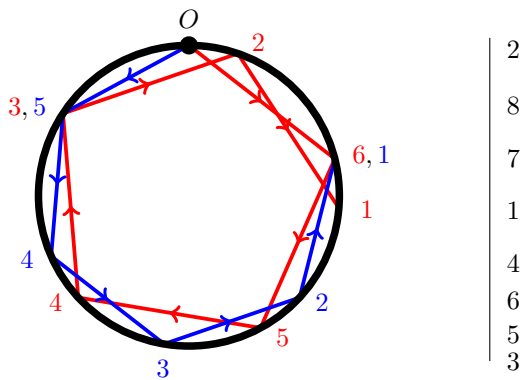
\begin{figure}[htbp]
\begin{center}
\begin{tikzpicture}[scale=0.5]
\usetikzlibrary{decorations.markings}
\begin{scope}[very thick,decoration={
markings,
mark=at position 0.5 with {\arrow{>}}}] 
\foreach \a in {0,1,2,...,5}
\draw[red,line width=0.5mm,postaction={decorate}] (90-4*\a*18.947:4)--(90-4*18.947-4*\a*18.947:4);
\foreach \a in {0,1,2,...,4}
\draw[blue,line width=0.5mm,postaction={decorate}] (90+3*\a*18.947:4)--(90+3*18.947+3*\a*18.947:4);
\draw[line width=1mm] (0,0) circle [radius=4];
\filldraw (90:4) circle [radius=.2];
\node at (0,4.7) {$O$};
\node[anchor=west] at (90-18.947:4.3) {$\color[rgb]{1,0,0}2\color[rgb]{0,0,0}$};
\node[anchor=west] at (90-4*18.947:4.2) {$\color[rgb]{1,0,0}6\color[rgb]{0,0,0},\color[rgb]{0,0,1}1\color[rgb]{0,0,0}$};
\node[anchor=west] at (90-5*18.947:4.3) {$\color[rgb]{1,0,0}1\color[rgb]{0,0,0}$};
\node[anchor=east] at (90+3*18.947:4.2) {$\color[rgb]{1,0,0}3\color[rgb]{0,0,0},\color[rgb]{0,0,1}5\color[rgb]{0,0,0}$};
\node[anchor=east] at (90+6*18.947:4.2) {$\color[rgb]{0,0,1}4\color[rgb]{0,0,0}$};
\node[anchor=west] at (90-8*18.947:4.3) {$\color[rgb]{1,0,0}5\color[rgb]{0,0,0}$};
\node[anchor=north] at (90+9*18.947:4.15) {$\color[rgb]{0,0,1}3\color[rgb]{0,0,0}$};
\node[anchor=west] at (90-7*18.947:4.2) {$\color[rgb]{0,0,1}2\color[rgb]{0,0,0}$};
\node[anchor=east] at (90+7*18.947:4.2) {$\color[rgb]{1,0,0}4\color[rgb]{0,0,0}$};
\end{scope}
\draw (8,-4.4)--(8,4.2);
\node[anchor=west] at (8.2,3.9) {$2$};
\node[anchor=west] at (8.2,2.4) {$8$};
\node[anchor=west] at (8.2,1) {$7$};
\node[anchor=west] at (8.2,-0.4) {$1$};
\node[anchor=west] at (8.2,-1.8) {$4$};
\node[anchor=west] at (8.2,-2.8) {$6$};
\node[anchor=west] at (8.2,-3.7) {$5$};
\node[anchor=west] at (8.2,-4.4) {$3$};
\end{tikzpicture}
\end{center}
\caption{Light beam model for an Erd\H{o}s-deep pair with $(k_1,k_2)=(7,6)$.}
\label{beams}
\end{figure}
This interpretation and the histograms above reveal the importance of estimating the size of the intersection of two modular arithmetic progressions.  This is considered later, in Section~\ref{sec:ap-int}.

\subsection{Elementary arithmetic constraints on the parameters}

Suppose we have an Erd\H{o}s-deep pair of arithmetic progressions of lengths $k_1 \ge k_2 \ge 3$, and suppose the multiplicity list for the pair is $1,2,\dots,k-1$.  Summing multiplicities, we have the identity
\begin{equation}
\label{basic-equation}
k(k-1)=k_1(k_1-1)+k_2(k_2-1).
\end{equation}
It is useful in what follows to consider the value $t:=k-k_1$, which we note is the number of distances in the family that are not in the longer AP.  Using this in \eqref{basic-equation} produces the parameterization
\begin{align}
\label{useful-param}
k_1 &= \frac{k_2(k_2-1)}{2t}-\frac{t-1}{2}.
\end{align}
That is, $k_1$ and $k$ are uniquely determined from $k_2$ and $t$.  Next, we present an elementary bound that is useful for our later analysis.

\begin{lemma}
\label{three-sevenths}
We have $t \le \frac{3}{7}(k_2-1)$ unless $k_1=k_2=3$
\end{lemma}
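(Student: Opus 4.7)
The plan is to convert the hypothesis $k_1 \ge k_2$ into a quadratic inequality in $k_2$ using the parameterization \eqref{useful-param}, and then compare the resulting lower bound to the target $k_2 \ge (7t+3)/3$ (which is equivalent to $t \le \tfrac{3}{7}(k_2-1)$). Substituting $k_1 = \frac{k_2(k_2-1)}{2t} - \frac{t-1}{2}$ into $k_1 \ge k_2$ and clearing the denominator $2t > 0$ yields
$$k_2^2 - (2t+1)k_2 - t(t-1) \ge 0,$$
so $k_2$ must lie at or beyond the positive root of the quadratic $f(x) := x^2 - (2t+1)x - t(t-1)$.

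The key step is a direct evaluation of $f$ at the target boundary. Plugging in $x = (7t+3)/3$ and expanding gives
$$f\!\left(\frac{7t+3}{3}\right) = \frac{(7t+3)^2 - 3(2t+1)(7t+3) - 9t(t-1)}{9} = -\frac{2t(t-6)}{9}.$$
For $t \ge 6$ this value is non-positive, so $(7t+3)/3$ lies to the left of the positive root of $f$, and Step~1 then forces $k_2 \ge (7t+3)/3$, which is the desired bound. Note that the constant $3/7$ in the lemma statement corresponds precisely to the fact that $t = 6$ is the critical root of this expression in $t$.

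For $t \in \{1,2,3,4,5\}$ the quadratic bound alone is too weak, so a finite case check is required. Here I would use the integrality of $k_1$: since $k_1 = [k_2(k_2-1) - t(t-1)]/(2t)$ must be an integer with $k_1 \ge k_2 \ge 3$, one enumerates admissible $k_2$ by imposing the divisibility condition $2t \mid k_2(k_2-1) - t(t-1)$. A short tabulation shows that the smallest admissible $k_2$ is $3, 6, 9, 12, 15$ for $t = 1, 2, 3, 4, 5$ respectively, and each of these (except the borderline pair $(t, k_2) = (1, 3)$, which produces $k_1 = k_2 = 3$) satisfies $k_2 \ge (7t+3)/3$.

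The main obstacle is the small-$t$ enumeration in Step~3: one must be attentive to the divisibility constraint so as not to overlook a smaller admissible $k_2$ for some $t \le 5$. The algebra in Steps~1--2 is routine, and the appearance of the factor $(t-6)$ in $f((7t+3)/3)$ is exactly what makes the threshold tight, explaining both the constant $3/7$ and the necessity of treating $t \le 5$ separately.
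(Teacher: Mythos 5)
Your proof is correct. It rests on the same two ingredients as the paper's argument---the parameterization \eqref{useful-param} combined with $k_1 \ge k_2$, plus an integrality check---but organizes them differently. The paper assumes $t > \frac{3}{7}(k_2-1)$, derives $k_2 > \frac{6}{7}k_1 + \frac{3}{7}(t-1)$, and splits on $k_1 \ge k_2+1$ (immediate contradiction) versus $k_1 = k_2$ (which forces $k_2 < 15$ and is then eliminated by checking that $2k_2(k_2-1)=k(k-1)$ has no integer solutions with $3 < k_2 < 15$). You instead rewrite $k_1 \ge k_2$ as the quadratic inequality $k_2^2 - (2t+1)k_2 - t(t-1) \ge 0$ and evaluate the quadratic at the threshold $(7t+3)/3$, which cleanly isolates the factor $t-6$ and settles every $t \ge 6$ at once; the residual cases $t \in \{1,\dots,5\}$ are then handled by the divisibility condition $2t \mid k_2(k_2-1) - t(t-1)$ together with $k_1 \ge k_2$. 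Your tabulated minima $3,6,9,12,15$ check out, and since the claim is a lower bound on $k_2$ for fixed $t$, verifying it at the smallest admissible $k_2$ does suffice. Your route makes the provenance of the constant $\frac{3}{7}$ more transparent and avoids the Pell-like check, at the cost of a slightly longer finite tabulation; the paper's route confines the integrality argument to the single diagonal case $k_1=k_2$.
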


\begin{proof}
Suppose $t>\frac{3}{7}(k_2-1)$.  Then, working from \eqref{useful-param},
$k_2(k_2-1) = 2k_1 t + t(t-1)$ implies
\begin{equation}
\label{three-sevenths-intermediate}
k_2 > \tfrac{6}{7}k_1 + \tfrac{3}{7}(t-1).
\end{equation}
When $k_1 \ge k_2+1$, this gives
$$k_2 > \tfrac{6}{7}k_2 + \tfrac{9}{49}(k_2-1) + \tfrac{3}{7} = \frac{51k_2+12}{49},$$
a contradiction.  When $k_1=k_2$, \eqref{three-sevenths-intermediate} becomes
$$k_2 > \tfrac{6}{7}k_2 + \tfrac{9}{49}(k_2-1) - \tfrac{3}{7} = \frac{51k_2-30}{49}.$$
This forces $k_2<15$.  It is straightforward to check that there are no integer solutions to $2k_2(k_2-1)=k(k-1)$
with $3<k_2<15$.  It follows that $k_1=k_2 \le 3$.
\end{proof}

We conclude this section with an important discussion on common divisors. 
Suppose $\{\ap_n(g_1,k_1),$ $\ap_n(g_2,k_2)\}$ is an Erd\H{o}s-deep pair of APs with $\gcd(n,g_1,g_2)=g$.
Every distance is a multiple of $g$, so
$\{\ap_{n/g}(g_1/g,k_1), \ap_{n/g}(g_2/g,k_2)\}$ has the same distance multiplicities, and is therefore also an Erd\H{o}s-deep pair.  For this reason, we henceforth assume that $\gcd(n,g_1,g_2)=1$.

Under this assumption, we also argue that $\gcd(n,g_1)$ may be assumed to equal 1.  Suppose a prime $p$ divides both $n$ and $g_1$ but not $g_2$.  Let $A_i = \ap_n(g_i,k_i)$ for $i=1,2$.  Let $D$ be the set of distances occurring in both $\Delta A_1$ and $\Delta A_2$.  Since $\Delta A_1$ consists of multiples of $p$, we have $|D| \le \lfloor (k_2-1)/p \rfloor \le (k_2-1)/2$.  On the other hand, $\cF=\{A_1,A_2\}$ being Erd\H{o}s-deep requires 
\begin{align*}
|D|&=k_1+k_2-k-1\\ 
&= k_2-1-t \ge \tfrac{4}{7}(k_2-1),
\end{align*}
where we have applied inclusion-exclusion and Lemma~\ref{three-sevenths}.  From this contradiction, $\gcd(n,g_1)=1$.
Now, upon multiplication by $g_1^{-1} \pmod{n}$, our problem is reduced to determining parameters $k_1, k_2, g_2,n$ with $k_1 \ge k_2$ 
such that $\{\ap_n(1,k_1)$, $\ap_n(g_2,k_2)\}$ is Erd\H{o}s-deep; that is, the internal distance multiplicities are $\{1,2,\dots,k-1\}$, where $k$ is uniquely determined from \eqref{basic-equation}.

\section{Classification for pairs}
\label{proof}

As seen in the previous section, the pair $\{\ap_n(1,k_1),\ap_n(g_2,k_2)\}$ can be Erd\H{o}s-deep only when many distances in the second AP are less than $k_1$.  This naturally leads us to consider the problem of bounding the number of times a modular AP hits a given interval.

\subsection{The intersection of a modular AP with an interval}
\label{sec:ap-int}


Let $a,b \in \Z$ with $a<b$.  Put $\ell:=b-a+1$.  It is easy to see that the number of multiples of $g$ in the interval $I=[a,b]$ can assume only two values, namely
\begin{equation}
\label{ap-interval}
|g\Z \cap I|  \in \{\lfloor \tfrac{\ell}{g} \rfloor, \lceil \tfrac{\ell}{g} \rceil\}.
\end{equation}
The same holds for any translate of $g\Z$.

If $\ell \le n$, the interval $I=[a,b]$ can be unambiguously interpreted as an $\ell$-subset of $\Z_n$. With this in mind, we 
refer to $I$ as an interval of size $\ell$ in $\Z_n$.  For instance, the `open ball' $\{x \in \Z_n: |x|_n <r\}$ is an interval $\{-(r-1),\dots,-1,0,1,\dots,r-1\}$ of size $2r-1$ in $\Z_n$.
 
Given an interval $I \subseteq \Z_n$ and a modular AP, say $A=\ap_n(g,k)$, we define the $I$-\emph{hitting sequence} of $A$ as $\mathbf{w}=(w_0,w_1,w_2,\dots,w_{k-1})$, where
$$w_i = 
\begin{cases} 
1 & \text{if~} gi \in I, \\
0 & \text{otherwise}.
\end{cases}
$$
We note that $\mathbf{w}$ resembles a `Sturmian word'; see for instance \cite{Arnoux}.
The number of ones in $\mathbf{w}$ is an upper bound on $|\ap_n(g,k) \cap I|$.
When $gk \ge n$, there are roughly $(gk/n)(\ell/g) = k \ell /n$ ones in $\mathbf{w}$; the exact value depends on the location of $I$ relative to the AP.   In a little more detail, the word $\mathbf{w}$ consists of alternating runs of ones and zeros, the lengths of which are governed by \eqref{ap-interval}.  A precise statement follows.

\begin{lemma}
Let $I \subseteq \Z_n$ be an interval of size $\ell$, and let $\mathbf{w}$ be the $I$-hitting sequence of $\ap_n(g,k)$.
We have, for some integer $p \ge 0$,
$\mathbf{w} = 1^{a_0} 0^{b_0} 1^{a_1} 0^{b_1} \cdots 1^{a_p} 0^{b_p},$
where
\begin{itemize}
\item
either $a_0 = 0$ and $0<b_0 \le  \lceil \frac{n-\ell}{g} \rceil$, or $0<a_0 \le \lceil \frac{\ell}{g} \rceil$ and $b_0 \in \{\lfloor \frac{n-\ell}{g} \rfloor, \lceil \frac{n-\ell}{g} \rceil\};$
\item
for $j=1,\dots,p-1$, 
$a_j \in \{\lfloor \tfrac{\ell}{g} \rfloor, \lceil \tfrac{\ell}{g} \rceil\}$ and $b_j \in \{\lfloor \tfrac{n-\ell}{g} \rfloor, \lceil \tfrac{n-\ell}{g} \rceil\};$ and
\item
either $b_p = 0$ and $0<a_p \le  \lceil \frac{\ell}{g} \rceil$, or $0<b_p\le \lceil \tfrac{n-\ell}{g} \rceil$ and $a_p \in \{\lfloor \frac{\ell}{g} \rfloor, \lceil \frac{\ell}{g} \rceil\}.$
\end{itemize}
\end{lemma}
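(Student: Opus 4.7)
The plan is to lift the problem from $\Z_n$ to $\Z$ and apply \eqref{ap-interval} period by period. Choose a representative so that $I = \{a, a+1, \dots, a+\ell-1\} \pmod{n}$ and form the periodic lift $\tilde I := \bigcup_{m \in \Z} [a + mn, a + \ell - 1 + mn] \subset \Z$, so that $w_i = 1$ precisely when $gi \in \tilde I$. The complement of $\tilde I$ in $\Z$ is a union of gap intervals of length $n - \ell$. By \eqref{ap-interval}, each in-interval of $\tilde I$ contains $\lfloor \ell/g \rfloor$ or $\lceil \ell/g \rceil$ multiples of $g$, and each gap contains $\lfloor (n-\ell)/g \rfloor$ or $\lceil (n-\ell)/g \rceil$ multiples of $g$.

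Since $gi$ is strictly increasing in $i$, the indices whose multiples of $g$ fall in the same in-interval form a maximal run of ones in $(w_i)_{i \ge 0}$, and likewise for gaps and runs of zeros. This yields the claimed alternating pattern on the infinite sequence, with the asserted bounds on every complete run. I would then restrict to the window $i \in \{0, \dots, k-1\}$: every run whose indices lie strictly inside the window is complete and so satisfies the tight $\{\lfloor \cdot \rfloor, \lceil \cdot \rceil\}$ conditions stated for $j = 1, \dots, p - 1$. Only the first and last runs can be truncated by the window boundary, and a short case analysis handles each end. If $w_0 = 1$, then $a_0$ is a left-truncated run of ones of length at most $\lceil \ell/g \rceil$ while $b_0$ (when present) is a complete gap; if $w_0 = 0$, then $a_0 = 0$ and $b_0$ is a left-truncated run of zeros of length at most $\lceil (n-\ell)/g \rceil$. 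An analogous dichotomy applies to $a_p$ and $b_p$ at the right end.

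The main subtlety I expect is the degenerate case $p = 0$, where the window sits inside a single in-interval or gap (or spans at most one of each), so the first-run and last-run clauses of the conclusion overlap and must be checked to be jointly realizable. Care is also needed when $g > \ell$ or $g > n - \ell$: some nominal middle runs then have length zero and should be absorbed cleanly by the statement's interval conditions, which permit the floor to equal zero. Once these boundary cases are verified, the lemma follows.
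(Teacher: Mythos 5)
Your proposal is correct and matches the paper's (omitted) argument: the paper states that the lemma "follows easily from partitioning $\ap_n(g,k)$ into `passes' around $\Z_n$, applying \eqref{ap-interval} to $I$ and $\Z_n \setminus I$ in each pass," which is exactly your periodic lift to $\Z$ with the two-value count applied to each copy of $I$ and each gap. Your attention to the truncated boundary runs, the $p=0$ overlap, and the zero-length runs when $g>\ell$ or $g>n-\ell$ covers the only delicate points.
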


We omit the proof, but remark that it follows easily from partitioning $\ap_n(g,k)$ into `passes' around $\Z_n$, applying \eqref{ap-interval} to $I$ and $\Z_n \setminus I$ in each pass.  Figure~\ref{ap-hits} illustrates how, for varying $g$ (shown horizontally), the supports of $\mathbf{w}$ partition into nearly equal-sized intervals (shown as vertical slices).

\begin{figure}[htbp]
\begin{center}
\includegraphics[width=15cm]{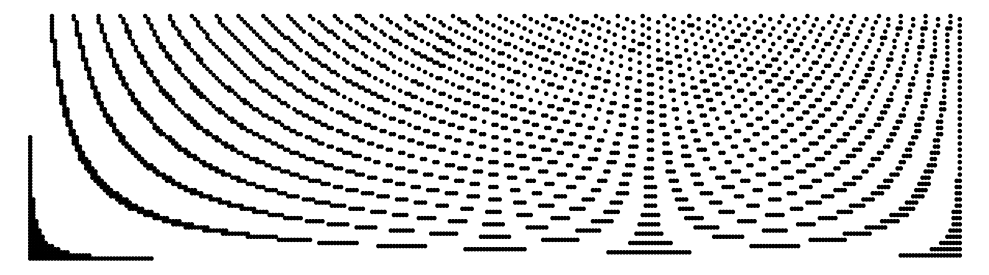}
\end{center}
\caption{Lattice points $(g,x) \in [1, n/2] \times [1,k)$ for which $|gx|_n \le r$; 
shown here for $n=601$, $k=80$, $r=40$.}
\label{ap-hits}
\end{figure}

Suppose $\mathbf{w} \neq 1^k$.  Using the inequality $\frac{a+c}{b+d} \le \max(\frac{a}{b},\frac{c}{d})$ and considering cases, we get
\begin{equation}
\label{ratio-bound}
\frac{|\ap_n(g,k) \cap I|}{|\ap_n(g,k) \setminus I|} =\frac{a_0+a_1+\dots+a_p}{b_0+b_1+\dots+b_p} \le
\begin{cases}
a_0/b_0 & \text{if}~p=0,\\
\lceil \tfrac{\ell}{g} \rceil/b_0 & \text{if}~p=1~\text{and}~a_0=0, \\
(a_0+\lceil \tfrac{\ell}{g} \rceil) / \lfloor \tfrac{n-\ell}{g} \rfloor &\text{otherwise}.
\end{cases}
\end{equation}
This analysis can be strengthened somewhat with more information on the generator $g$, the number of passes $p$, or on the suffix parameters $a_p,b_p$.  For instance, when $g$ is fixed and $p$ is large, the fraction in \eqref{ratio-bound} approaches $\ell/(n-\ell)$. Or, $g \ge \ell$ implies $\mathbf{w}$ has no two consecutive ones, and so in this case the quantity is at most $1$.  
We use \eqref{ratio-bound} and some minor variations in the arguments that follow.

\subsection{Upper bounds on $n$ via distinct multiplicities}
\label{sec:n-upper}

Here, we show that, in an Erd\H{o}s-deep family $\{\ap_n(1,k_1),\ap_n(g_2,k_2)\}$, the modulus $n$ is at most linear in each of $k_1$ and $k_2$.
The general idea is that a large modulus would not allow enough common distances in the two APs.  The hitting sequence structure discussed in Section~\ref{sec:ap-int} is helpful for our estimates.


\begin{prop}
\label{beta-upper-bound}
An Erd\H{o}s-deep pair of APs of lengths $k_1 \ge k_2>3$ satisfies $n<6k_1$.
\end{prop}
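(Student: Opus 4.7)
My plan is to sandwich the number of common distinct distances between a lower bound from Lemma~\ref{three-sevenths} and an upper bound from the hitting-sequence analysis of Section~\ref{sec:ap-int}, engineered to collide when $n \ge 6k_1$.

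Set $A_1 = \ap_n(1,k_1)$, $A_2 = \ap_n(g_2,k_2)$, and let $I = \{-(k_1-1),\ldots,k_1-1\}$ be the interval of size $\ell = 2k_1-1$ in $\Z_n$ centred at $0$.  Since the distinct distances of $A_1$ are exactly $\{1,\ldots,k_1-1\}$, every common distance has the form $|jg_2|_n$ for some $j \in \{1,\ldots,k_2-1\}$ with $jg_2 \in I$; writing $N$ for the number of such $j$ and $D$ for the common distinct-distance set, we get $|D| \le N$.  On the other hand, inclusion-exclusion on distinct distances combined with the Erd\H{o}s-deep hypothesis gives $|D| = k_1 + k_2 - k - 1 = k_2 - 1 - t$, and Lemma~\ref{three-sevenths} (applicable since $k_2 > 3$) then yields
$$|D| \ge \tfrac{4}{7}(k_2-1).$$

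Assume for contradiction that $n \ge 6k_1$, so $n - \ell \ge 4k_1 + 1 > 2\ell$.  Apply the hitting-sequence lemma of Section~\ref{sec:ap-int} to $\ap_n(g_2,k_2)$ with interval $I$ and write $\mathbf{w} = 1^{a_0} 0^{b_0} \cdots 1^{a_p} 0^{b_p}$.  Because $0 \in I$ we have $a_0 \ge 1$, which excludes the middle branch of \eqref{ratio-bound}.  In the ``otherwise'' branch, using $a_0 \le \lceil \ell/g_2 \rceil$,
$$\frac{N+1}{k_2-(N+1)} \;\le\; \frac{a_0 + \lceil \ell/g_2 \rceil}{\lfloor (n-\ell)/g_2 \rfloor} \;\le\; \frac{2\lceil \ell/g_2 \rceil}{\lfloor (n-\ell)/g_2 \rfloor},$$
and the inequality $n - \ell > 2\ell$, after absorbing the $O(1)$ floor/ceiling corrections, produces $N + 1 \le k_2/3 + O(1)$.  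Combined with $N \ge \tfrac{4}{7}(k_2-1)$, and using that $\tfrac{4}{7}(k_2-1) > k_2/3$ for every $k_2 \ge 3$, this fails for all $k_2$ beyond a modest explicit threshold; the few surviving small cases are then ruled out by direct substitution into the parameterization \eqref{useful-param} and a finite search.

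The boundary case $p = 0$ (the AP $A_2$ makes a single visit to $I$ and never returns) needs to be treated separately.  Here $N + 1 = a_0 \le \lceil \ell/g_2 \rceil$, and, after replacing $g_2$ with $n - g_2$ if necessary, one may assume the visited portion lies in $[0,k_1-1]$, so $N \le \lfloor (k_1-1)/g_2 \rfloor$ and $A_2$ does not wrap around $\Z_n$.  In this essentially non-modular regime, matching $N \ge \tfrac{4}{7}(k_2-1)$ against $N \le \lfloor(k_1-1)/g_2\rfloor$ forces $g_2 \le \tfrac{7(k_1-1)}{4(k_2-1)}$; the remaining constraints from the Erd\H{o}s-deep condition (which now impose very restrictive conditions on the multiplicities attached to $\{g_2, 2g_2, \ldots, (k_2-1)g_2\}$) combined with \eqref{useful-param} leave only a short finite check.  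I expect the main technical obstacle to be the careful bookkeeping of the $\lceil\cdot\rceil$ and $\lfloor\cdot\rfloor$ errors in \eqref{ratio-bound}, keeping the ``small $k_2$'' threshold low enough that the finite check remains tractable.
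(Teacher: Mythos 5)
Your strategy is the paper's own (compare the lower bound $|D|=k_2-1-t\ge\tfrac{4}{7}(k_2-1)$ from inclusion--exclusion and Lemma~\ref{three-sevenths} against an upper bound on $|\ap_n(g_2,k_2)\cap I|$ from \eqref{ratio-bound}), but two steps as written do not go through. First, the claimed conclusion $N+1\le k_2/3+O(1)$ does not follow from your displayed inequality. With $\ell=2k_1-1$ and $n-\ell\ge 4k_1+1$, the quantity $2\lceil \ell/g_2\rceil/\lfloor(n-\ell)/g_2\rfloor$ is roughly $2\ell/(n-\ell)\approx 1$ for small $g_2$, which yields only $N+1\lesssim k_2/2$; worse, for $g_2$ in the nonempty range $(\tfrac{n-\ell}{2},\tfrac{n}{2}]$ one has $\lfloor(n-\ell)/g_2\rfloor=1$ and $\lceil\ell/g_2\rceil=1$, so your bound degenerates to $N+1\le\tfrac{2}{3}k_2$, which is consistent with $N\ge\tfrac{4}{7}(k_2-1)$ for every $k_2\ge 5$ --- no contradiction, no matter how large $k_2$ is. The repair needs exactly the two refinements the paper uses: (i) since the AP starts at the centre of $I$, its first run of ones covers only the half-interval $[1,k_1-1]$, so $a_0\le\lfloor(k_1-1)/g_2\rfloor$ rather than $\lceil\ell/g_2\rceil$ (this is why the paper tracks $A_2\setminus\{0\}$); and (ii) when $g_2>2k_1-1=|I|$ every run of ones has length one, so the ratio is at most $1$ outright. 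Once the ratio is $\le 1$ you get $t\ge(k_2-1)/2$, contradicting Lemma~\ref{three-sevenths} for every $k_2>3$, with no residual small cases to check.

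Second, both of your appeals to a ``finite search'' are not finite. The quantity this proposition bounds is $n$ itself, so for each surviving pair $(t,k_2)$ the modulus still ranges over all $n\ge 6k_1$ and $g_2$ over $\Z_n$; and in your $p=0$ branch the constraint $g_2\le 7(k_1-1)/(4(k_2-1))$ does not bound $k_2$ either. That $p=0$ (non-wrapping) case carries the real content of the proposition and requires a different idea, which your sketch replaces with a gesture at ``restrictive conditions on the multiplicities.'' The paper's argument there is: if $t\ge 2$, then $g_2(k_2-1)$ and $g_2(k_2-2)$ lie outside $I$ and carry total multiplicities $1$ and $2$ in $\Delta\cF$; since each multiplicity is attained by a unique distance, $k_1-1$ and $k_1-2$ (which have multiplicities $1$ and $2$ in $\Delta A_1$) must then also be distances of $A_2$, forcing $g_2=1$, a contradiction; the remaining case $t=1$ is closed by computing the total multiplicities of $g_2$ and $2g_2$ exactly and exhibiting a repeated multiplicity. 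Without an argument of this kind, the non-wrapping regime stays open.
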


\begin{proof}
Write $g_2=g$ for simplicity.  
Let $A=\ap_n(g,k_2)=\{0,g,2g,\cdots,(k_2-1)g\}$.  Recall that $A':=A \setminus \{0\}$ is the set of differences that occur in $A$.  Let $B=B_{k_1}=\{x \in \Z_n : |x|_n<k_1\}$ and let $\mathbf{w}$ be the $B$-hitting sequence of $A'$.

Suppose for contradiction that $n \ge 6k_1$.  Consider the ratio $|A' \cap B|/|A' \setminus B|$.  The denominator equals $t=k-k_1$; the numerator equals $k_1+k_2-k-1$, by inclusion-exclusion.  If we can show the ratio is at most $1$, then $2t \ge k_2-1$, which contradicts Lemma~\ref{three-sevenths}.

Suppose first that $\mathbf{w}$ has at least two blocks of ones (separated by at least one zero). If $g>2k_1-1=|B|$, then each block of ones in $\mathbf{w}$ has length one.  This implies $|A' \cap B| \le |A' \setminus B|$.  On the other hand, if $g \le 2k_1-1$, then \eqref{ratio-bound} gives
$$\frac{|A' \cap B|}{|A' \setminus B|} \le \frac{\lfloor  \tfrac{k_1-1}{g} \rfloor +  \lceil \frac{2k_1-1}{g} \rceil}{ \lfloor \frac{4k_1-1}{g} \rfloor} \le \frac{\lfloor  \tfrac{k_1-1/2}{g} \rfloor +  \lceil \frac{2k_1-1}{g} \rceil}{ \lfloor \frac{4k_1-2}{g} \rfloor}  \le 1,$$
where in the last step we have applied the straightforward estimate $\lfloor 4x \rfloor \ge \lceil 2x \rceil +\lfloor x \rfloor$ for $x \ge \frac{1}{2}$, with $x$ taking the role of $\tfrac{k_1-1/2}{g}$. 

Suppose now that $\mathbf{w}=1^{a_0}0^{b_0}$, which is equivalent to $g(k_2-1) \le n-k_1$.  
Recall that $|A \setminus B|=t$, a positive integer.  If $t\ge 2$, then $g(k_2-2),g(k_2-1) \in A \setminus B$.  The multiplicities of these elements in $\Delta A$ are $2$ and $1$, respectively.  But elements $k_1-2,k_1-1$ have multiplicities $2$ and $1$ as distances in $\ap_n(1,k_1)$.  Therefore, we must have $k_1-2,k_1-1 \in \Delta A$, which implies $g=1$, a contradiction.  

Finally, suppose $p=1$ and $t=1$. Since $g$ is the least element of $\Delta A$, the total multiplicity of $g$ is $(k_1-g)+(k_2-1) = k_1$.  This implies $g=k_2-1$.  Now, because $k_2>3$, the total multiplicity of distance 
$2g$ is $(k_1-2g)+(k_2-2) = k_1-k_2$.  But $g+1 \not\in A$, so the total multiplicity of $g+1$ is $k_1-(g+1)=k_1-k_2$.  This repeated multiplicity is another contradiction.
\end{proof}


Before stating our next bound on $n$, it is convenient to estimate the number of small gaps between distances in a modular AP.

\begin{lemma}
\label{two-thirds-bound}
Suppose $n>18k$ and $1<g \le n/2$.
Define $D(g,k,n):=\{|gx|_n - |gy|_n : 1 \le x,y < k\}$.  Then
$$|D(g,k,n) \cap \{1,2,\dots,k-1\}| < 2k/3.$$
\end{lemma}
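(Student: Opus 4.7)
The plan is to reduce the count of small differences to a hit-count of a modular AP against a short interval, and then apply the machinery of Section~\ref{sec:ap-int}.

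\textbf{Reduction.} Suppose $d \in D(g,k,n) \cap \{1,\ldots,k-1\}$ arises as $|xg|_n - |yg|_n$ with $x,y \in \{1,\ldots,k-1\}$. Expanding each $|\cdot|_n$ as $\pm$ the argument modulo $n$ yields $d \equiv cg \pmod{n}$ for some integer $c$ with $|c| \in \{|x-y|,\,x+y\}$; hence $d = |cg|_n$ for some $c \in \{1,\ldots,2k-2\}$. Therefore
\[
|D(g,k,n) \cap \{1,\ldots,k-1\}| \;\le\; \#\{c \in \{1,\ldots,2k-2\} : |cg|_n \le k-1\},
\]
which (excluding $c=0$) is the number of $1$'s in the $I'$-hitting sequence of $A := \ap_n(g,\,2k-1)$, where $I' = \{x \in \Z_n : |x|_n \le k-1\}$ is an interval of size $\ell = 2k-1$.

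\textbf{Ratio bound.} Applying \eqref{ratio-bound} to $A$ and $I'$, and using $n > 18k$ (so $n - \ell > 16k$), a routine estimate shows the interior ratio $\lceil \ell/g\rceil/\lfloor (n-\ell)/g\rfloor$ is at most $\tfrac{1}{2}$ whenever $g$ is not too large (roughly $g \le 2k$); in this range this yields $|A \cap I'| \le (2k-1)/3$, whence $|D \cap \{1,\ldots,k-1\}| < 2k/3$ follows at once.

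\textbf{Large-$g$ regime.} The main obstacle is the case $g \gtrsim 2k$. Here each block of $1$'s in the hitting sequence is a single $1$ (since $g > \ell$), and the crude hit-count is at most the number of passes of $A$ around $\Z_n$, namely $\lfloor (2k-2)g/n\rfloor + 1 \le k$, which is too weak on its own. To improve this, I would analyze the cluster structure of $V := \{|xg|_n : 1 \le x < k\}$ in $[0, n/2]$: pairs of $V$-elements drawn from different clusters have gap exceeding $k-1$ and so contribute nothing, while within a single cluster the elements form a near-arithmetic progression with common step equal to a small ``return offset'' $\rho := |j_0 g|_n$ for an appropriate small $j_0$. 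The distinct within-cluster differences in $\{1,\ldots,k-1\}$ are thus multiples of $\rho$, and an elementary counting argument splitting on whether $\rho = 1$ (in which case one uses that the number of clusters is at least $2$ whenever $(k-1)g \ge n$, forcing each cluster to have size at most $\lceil (k-1)/2 \rceil$) or $\rho \ge 2$ (in which case only $\lfloor (k-1)/\rho \rfloor \le (k-1)/2$ multiples of $\rho$ land in $\{1,\ldots,k-1\}$) shows that $|D \cap \{1,\ldots,k-1\}|$ is strictly less than $2k/3$ throughout the remaining parameter range.
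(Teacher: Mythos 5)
Your reduction of $D(g,k,n)\cap\{1,\dots,k-1\}$ to the hit-count of $\ap_n(g,2k-2)$ against the ball of radius $k-1$ is exactly the paper's first step, and your small-$g$ ratio estimate matches the paper's Case 1. The problem is the ``large-$g$ regime,'' which you correctly identify as the crux but only sketch, and the sketch has real gaps. First, you miss a cheap observation that disposes of most of that regime: a subword $11$ in the hitting sequence forces $g<2k-1$, and a subword $101$ forces $|2g|_n\le 2k-2$, i.e.\ $g>n/2-k$. Hence for all $g$ with $2k-1\le g\le n/2-k$ the ones in the word are pairwise separated by at least two zeros, so at most about a third of the $2k-2$ positions are ones and the bound is immediate --- no cluster analysis needed. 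This is how the paper reduces the hard part to the narrow window $g>n/2-k$.

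Second, in the genuinely hard window (which your ``$g\gtrsim 2k$'' case must cover), your cluster argument does not close. The claim that the distinct within-cluster differences are multiples of a single return offset $\rho=|j_0g|_n$ presumes that each cluster is exactly one residue class modulo $j_0$; if two residue classes $r_1\not\equiv r_2 \pmod{j_0}$ happen to satisfy $\bigl|\,|r_1g|_n-|r_2g|_n\,\bigr|\le k-1$, the clusters merge and the merged difference set lives in $\rho\Z\cup(\delta+\rho\Z)$ for some offset $\delta$, which for $\rho=2$ already allows up to $k-1$ distinct values. Ruling out such merging needs an argument (minimality of $j_0$ only controls one of the two sign cases for $|(r_1-r_2)g|_n$). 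Likewise, ``at least two clusters forces each cluster to have size at most $\lceil(k-1)/2\rceil$'' does not follow: clusters partition at most $k-1$ points, so only \emph{some} cluster is that small unless you prove the clusters are balanced. The paper avoids all of this for $g$ near $n/2$ by setting $h=|2g|_n\le 2k-2$ and splitting $\ap_n(g,2k-2)$ into even and odd multiples of $g$, each a translate of $\ap_n(h,k-1)$ with small generator, to which the ratio bound \eqref{ratio-bound} applies again; the degenerate endpoint $g=\lfloor n/2\rfloor$ is then checked directly. To make your write-up a proof you would either need to supply the missing no-$11$/no-$101$ observation plus a rigorous treatment of $g$ near $n/2$, or substantially firm up the cluster structure claims.
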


\begin{proof}
There are two possibilities for a positive element in $D$: either
$|gx|_n - |gy|_n = |g(x-y)|_n$ or $|g(x+y)|_n$. 
So, consider the ball
$B_k=\{x \in \Z_n:|x|_n<k\}$, and the $B_k$-hitting sequence $\mathbf{w}$ for $\ap_n(g,2k-2)$.
If $\mathbf{w}$ has no subword of the form `$11$' or `$101$', the conclusion is automatic.
The subword  `$11$' implies $g<2k-1$, and `$101$' implies $g>n/2-k$.

{\sc Case 1}: $1 < g \le 2k-2$.  Let $A=\ap_n(g,2k-2)$.  If $\mathbf{w}$ contains only one block of ones, then $g\ge 2$ implies $|A \cap B_k| \le k/2$.  Otherwise, from \eqref{ratio-bound},
\begin{equation}
\label{case1}
\frac{|A \cap B_k|}{|A \setminus B_k|} \le \frac{\lceil \tfrac{k-1}{g} \rceil + \lceil \tfrac{2k-1}{g} \rceil}{\lfloor \tfrac{n-2k+1}{g} \rfloor} \le \frac{3\lceil \tfrac{k}{g} \rceil}{\lfloor \tfrac{16k}{g} \rfloor} \le  \frac{3}{8}.
\end{equation}
In the last step, we have used the simple estimate $\lfloor 2x \rfloor \ge \lceil x \rceil$ for $x \ge \frac{1}{2}$.  From \eqref{case1}, we have $|A \cap B_k| \le \frac{1}{3}(2k-3) < 2k/3$.


{\sc Case 2}: $\lfloor n/2 \rfloor-k+1 \le g < \lfloor n/2 \rfloor$.  Put $h=-2g=|2g|_n$, and note that $1<h \le 2k-2$.  It is convenient to partition $A$ into even and odd multiples of $g$, namely $A=A_{\text{even}} \cup A_{\text{odd}}$, where 
$A_{\text{even}} = \ap_n(h,k-1)$ and $A_{\text{odd}} = g+\ap_n(h,k-1)$.  Arguing similarly as in Case 1, we have
$|A_{\text{even}} \cap B_k| < k/3$.  For the other half, we observe that the hitting sequence for $A_{\text{odd}}$ begins with at least $\lfloor (\tfrac{n}{2}-k)/g \rfloor$ zeros.  Using \eqref{ratio-bound} again gives 
$$\frac{|A_{\text{odd}} \cap B_k|}{|A_{\text{odd}} \setminus B_k|} \le \max \left(
\frac{2\lceil \tfrac{k}{h} \rceil}{\lfloor \tfrac{8k}{h} \rfloor},
\frac{3\lceil \tfrac{k}{h} \rceil}{\lfloor \tfrac{16k}{h} \rfloor} \right)  
\le \frac{1}{2}.$$
So $|A_{\text{odd}} \cap B_k| < k/3$.  Combining the even and odd multiples then gives  $|A \cap B_k| \le 2k/3$.

For the final case, we return to directly examining $D(g,k,n)$.

{\sc Case 3}: $g=\lfloor n/2 \rfloor$.  When $n$ is even, the only possible elements in $D(g,k,n)$ are $0$ and $n/2$.  For odd $n$,
we have $2g \equiv -1 \pmod{n}$, and so the positive values in $D(g,k,n)$ are contained in the union of intervals $(0,\tfrac{k-1}{2}) \cup [n/2-k,n/2]$.  From the assumption $n> 18k$, the latter interval is disjoint from $\{1,\dots,k-1\}$, meaning our intersection is at most $\frac{k-1}{2}$, and hence less than $\frac{2k}{3}$.
\end{proof}

\rk
The constant 18 is likely far from best possible.  The conclusion is perhaps still true for $n>10k$.

\begin{prop}
\label{prop:n-upperbound}
An Erd\H{o}s-deep pair of APs of lengths $k_1 \ge k_2 >3$ satisfies $n \le 18k_2+36$.
\end{prop}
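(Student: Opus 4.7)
My plan is to argue by contradiction: suppose $n > 18k_2 + 36$. Combining with Proposition~\ref{beta-upper-bound}, which gives $n < 6k_1$, yields $k_1 > 3k_2 + 6$; substituting into the parametrization~\eqref{useful-param} and rearranging then gives $t < (k_2-1)/6$. As in the proof of Proposition~\ref{beta-upper-bound}, inclusion-exclusion forces $|A' \cap B_{k_1}| = k_2 - 1 - t > \tfrac{5(k_2-1)}{6}$, where $A' := \ap_n(g_2,k_2) \setminus \{0\}$.

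Next I would run a multiplicity-collision analysis. Write $d_h := |hg_2|_n$ and $\cD_2 := \{d_h : 1 \le h \le k_2 - 1\}$. For each $h$ with $d_h < k_1$, the contribution from $A_2$ bumps the combined multiplicity at $d_h$ from $k_1 - d_h$ (its value in $\Delta A_1$) to $k_1 - d_h + (k_2 - h)$. For this bumped value not to coincide with an un-bumped multiplicity $k_1 - d'$ at some $d' \in \{1,\ldots,k_1-1\}\setminus \cD_2$, the integer $d' = d_h + h - k_2$ must fall outside $\{1,\ldots,k_1-1\} \setminus \cD_2$. Since $d_h + h \le k_1 + k_2 - 2$, the only possibilities are \emph{Case A}: $d_h + h \le k_2$, or \emph{Case B}: $d_h + h - k_2 \in \cD_2$. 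In Case B, the value $k_2 - h = d_h - d_{h'}$ is a positive element of $D(g_2,k_2,n) \cap \{1,\ldots,k_2-1\}$, and different $h$'s give different values. Lemma~\ref{two-thirds-bound}, applicable because $n > 18k_2$, therefore caps the Case B count at less than $2k_2/3$, so the Case A count exceeds $\tfrac{5(k_2-1)}{6} - \tfrac{2k_2}{3} = \tfrac{k_2-5}{6}$.

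The contradiction then comes from showing the Case A count is too small. When $|g_2|_n \in \{2,3\}$, the hypothesis $n > 18k_2+36$ ensures $h|g_2|_n < n/2$ for all $h \le k_2-1$, so every distance of $A_2$ equals $h|g_2|_n \le 3(k_2-1) < 3k_2 + 6 \le k_1$, whence $t = 0$ contradicts $t \ge 1$. For $|g_2|_n \ge 4$, I would apply the hitting-sequence machinery of Section~\ref{sec:ap-int} to the sequence of shrinking balls $B_{k_2-h}$ to bound $|\{h : |hg_2|_n \le k_2 - h\}|$; this produces a bound comparable to $k_2^2/n$, which under $n > 18k_2+36$ drops strictly below $\tfrac{k_2-5}{6}$ for all but finitely many $k_2$, and the $+36$ slack absorbs those residual small cases. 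The hardest step will be making this final Case A upper bound uniform in $g_2$: the naive per-$h$ bound $|hg_2|_n \ge h |g_2|_n$ only gives $A_c \le k_2/(|g_2|_n + 1)$, which is too weak for moderate $g_2 \in \{4,5\}$. Here I expect to supplement the collision argument with the distinctness constraint on bumped multiplicities $\{d_h + h\}$ and the arithmetic constraint $|\cD_2 \setminus B_{k_1}^*| = t$, likely splitting cases on the size of $g_2$ relative to $n/k_2$ in the spirit of Lemma~\ref{two-thirds-bound}.
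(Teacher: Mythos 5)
Your opening reduction ($n>18k_2+36$ together with Proposition~\ref{beta-upper-bound} gives $k_1>3k_2+6$, hence $t<(k_2-1)/6$ via \eqref{useful-param}) and your multiplicity-collision analysis are sound, and the latter is essentially the paper's own central step: the paper likewise shows that each shared distance of total multiplicity below $k_1$ forces a relation $|xg_2|_n-|yg_2|_n=k_2-x$ and invokes Lemma~\ref{two-thirds-bound} to cap the number of such distances by $2k_2/3$. The genuine gap is in your endgame. The plan to bound the Case~A count by a fresh hitting-sequence argument on the shrinking balls $B_{k_2-h}$ is not carried out: the claim that it ``produces a bound comparable to $k_2^2/n$'' which ``drops strictly below $(k_2-5)/6$'' is asserted rather than proved, you yourself flag that the bound is not uniform in $g_2$ for moderate generators, and ``the $+36$ slack absorbs those residual small cases'' is not an argument. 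As written, the contradiction is never reached.

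What you are missing is that no new estimate is needed for Case~A. A shared distance $d_h$ with $d_h+h\le k_2$ is precisely a distance of total multiplicity $k_1+k_2-d_h-h\ge k_1$ in $\Delta\cF$; since only shared distances can have multiplicity $\ge k_1$ and the multiplicities $k_1,\dots,k-1$ each occur exactly once, the Case~A count equals $t$ exactly. Even with this, your two displayed bounds read $(k_2-5)/6<t<(k_2-1)/6$, which is still not a contradiction; to close the argument you need the sharper forms on both sides. From $k_1\ge 3k_2+7$ and $2tk_1+t(t-1)=k_2(k_2-1)$ one gets $t\le k_2(k_2-1)/(6k_2+14)<(k_2-2)/6$ for $k_2\ge 4$, while counting Case~B as $k_2-1-2t<2k_2/3$ gives $t\ge(k_2-2)/6$ --- a clean contradiction. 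This is in effect what the paper does: it converts the Lemma~\ref{two-thirds-bound} bound into $k_2\le 6t+2$ and substitutes $t\ge(k_2-2)/6$ and $k_1/n>1/6$ into the exact identity \eqref{basic-equation}. So your route is salvageable, but the step you identify as the hardest is both unproven and unnecessary, and without replacing it the proof does not go through.
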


\begin{proof}
Suppose for contradiction that $n > 18k_2+36$ and that $\{A_1,A_2\}$ is Erd\H{o}s-deep, where $A_1 = \ap_n(1,k_1)$ and $A_2=\ap_n(g_2,k_2)$.  Consider the set $S$ of distances in both $\Delta A_1$ and $\Delta A_2$.  For a shared distance 
$|xg_2|_n \in S$, its total multiplicity in $\Delta \cF$ equals $(k_1-|xg_2|_n)+(k_2-x)$.  Exactly $t=k-k_1$ elements of $S$ have multiplicity at least $k_1$ in $\Delta \cF$.  Since there are $t$ distinct distances in $\Delta A_2$ but not $\Delta A_1$, there are $k_2-2t-1$ distances in $S$
that have total multiplicity less than $k_1$.  Suppose $|xg_2|_n \in S$ is one such distance.
Let $|yg_2|_n$ be the distance with multiplicity in $\Delta A_1$ equal to the multiplicity
of $|xg_2|_n$ in $\Delta \cF$. Then 
$k_1-|yg_2|_n = (k_1-|xg_2|_n)+(k_2-x)$.
Rearranging gives $|xg_2|_n - |yg_2|_n = k_2 - x$. From Lemma~\ref{two-thirds-bound}, we have 
$$k_2-2t-1 \le |D(g_2,k_2,n) \cap \{1,\dots,k_2-1\}| < 2k_2/3,$$ or $k_2 \le 6t+2$.
Now, using \eqref{basic-equation},
$$\frac{k_2(k_2-1)n}{2tk_1} -  \frac{(t-1)n}{2k_1} = n >18k_2+36,$$
or, after some algebra,
\begin{equation}
\label{eq:n-upperbound}
k_2(k_2-1) > 36(k_2+2) \frac{tk_1}{n} +t(t-1).
\end{equation}
Using Proposition~\ref{beta-upper-bound} and that $t$ is a positive integer at least $\frac{1}{6} (k_2-2)$, \eqref{eq:n-upperbound} implies
$$k_2(k_2-1) > (k_2-2)(k_2+2),$$
or $k_2<4$.  Since we have assumed $k_2 \ge 4$, it must be the case that $n \le 18k_2+36$.
\end{proof}

\subsection{Lower bound on $n$ via maximum multiplicity}
\label{sec:n-lower}

Changing tactic, we next establish a lower bound on $n$ from the fact that no distance occurs too often.

\begin{prop}
\label{prop:n-lowerbound}
An Erd\H{o}s-deep pair of APs of lengths $k_1 \ge k_2 \ge 3$ satisfies $n \ge (k_2-t)^2/4$.
\end{prop}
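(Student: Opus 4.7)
Set $r := k_2 - t$ and $u := g_2^{-1} \pmod n$. The argument has two steps: first extract a Diophantine constraint on $u$ from the Erd\H{o}s-deep hypothesis, then apply a pigeonhole on the circle $\Z_n$.

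\emph{Step 1.} I claim that $|du|_n \ge r+1-d$ for every $d \in \{1,2,\ldots,r-1\}$. Fix such a $d$. Since $r \le k_2 \le k_1$, we have $d \le k_1 - 1$, so the multiplicity of $d$ in $\Delta A_1$ is at least $k_1 - d$. The Erd\H{o}s-deep property bounds the total multiplicity of any distance in $\Delta \cF$ by $k-1$, so the $\Delta A_2$ contribution to $d$ is at most $t + d - 1$. In the generic case where exactly one of the two residues of $\pm du \pmod n$ lies in $\{1,\ldots,k_2-1\}$, that contribution equals $k_2 - |du|_n$, and the claim rearranges immediately. The remaining sub-cases are quick: if neither residue is in range then the $\Delta A_2$ contribution is $0$ and $|du|_n \ge k_2 \ge r+1-d$ trivially; if both are in range (which forces $n < 2k_2$), the contribution equals $2k_2 - n$, and the bound $2k_2 - n \le t+d-1$ combined with $|du|_n > n - k_2$ yields $|du|_n \ge r - d + 2$, which is even stronger than what we need.

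\emph{Step 2.} Consider the $m$ points $0, u, 2u, \ldots, (m-1)u$ in $\Z_n$. Their consecutive circular gaps sum to $n$, so at least one gap has length $\le n/m$; equivalently there exists $s \in \{1,\ldots,m-1\}$ with $|su|_n \le n/m$. Provided $m \le r$, the estimate from Step~1 applies to $s$ and yields
\begin{equation*}
\frac{n}{m} \;\ge\; |su|_n \;\ge\; r + 1 - s \;\ge\; r + 2 - m,
\end{equation*}
so $n \ge m(r+2-m)$. Taking $m = \lceil (r+2)/2 \rceil$ makes the right side equal to $\lceil (r+2)/2 \rceil \lfloor (r+2)/2 \rfloor$, which is at least $r^2/4$ in both parities of $r$. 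This gives $n \ge (k_2 - t)^2/4$, as required. The condition $r \ge 2$ needed for a nontrivial pigeonhole holds by Lemma~\ref{three-sevenths}; in the exceptional case $k_1 = k_2 = 3$ we have $r = 2$ and the claimed bound $n \ge 1$ is trivial.

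The main obstacle is the double-hit sub-case in Step~1, where $n < 2k_2$ allows both solutions of $sg_2 \equiv \pm d \pmod n$ to land in $\{1, \ldots, k_2 - 1\}$ and the usual formula $k_2 - |du|_n$ for the $\Delta A_2$-multiplicity is replaced by $2k_2 - n$. The bookkeeping there is a bit delicate, but fortunately the resulting bound on $|du|_n$ is stronger than in the generic case, so a single uniform inequality suffices for Step~2. The pigeonhole itself and the optimization in $m$ are routine.
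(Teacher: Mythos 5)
Your proof is correct and is essentially the paper's argument run through the inverse $u=g_2^{-1}$: both proofs pigeonhole roughly $(k_2-t)/2$ multiples around $\Z_n$ to produce a distance that is simultaneously small (hence frequent in $\ap_n(1,k_1)$) and of small index in $\ap_n(g_2,k_2)$ (hence frequent there too), contradicting the cap of $k-1$ on every multiplicity. The paper pigeonholes the points $0,g_2,\dots,mg_2$ of the second AP directly into $m$ intervals of $\Z_n$, which folds your Step 1 and its double-hit bookkeeping into a one-line multiplicity count, but the mechanism and the resulting constant $1/4$ are identical.
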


\begin{proof}
Put $\mu= (k_2-t)/2$ and $m=\lfloor \mu \rfloor$. Suppose for contradiction that $n<\mu^2$.  We show that one of the $m$ most frequent shared distances, call it $d$, is at most $\mu$.  From our assumption on $n$, we may partition $\Z_n$ into $m$ intervals, each of size at most $m+1$.  By the pigeonhole principle, there exist two elements of $\ap_n(g_2,m+1)$ in the same interval, say $jg_2,j'g_2$, where $0 \le j<j' \le  m$.  Put $d:=|jg_2-j'g_2|_n$.  Since $k_2 \ge m+1$, the distance $d$ occurs in $\ap_n(g_2,k_2)$ with multiplicity at least $k_2-m$.  Moreover, we have $d \le m$ since $jg_2,j'g_2$ belong to the same interval; hence the multiplicity of distance $d$ in $\ap_n(1,k_1)$ is at least $k_1-m$.  The total multiplicity of $d$ in $\Delta \cF$ is at least $(k_1-m)+(k_2-m) \ge k_1+k_2-2\mu = k_1+t = k$, a contradiction to the Erd\H{o}s-deep property.
\end{proof}

At this point, we are able to reduce the classification to a finite problem.

\begin{thm}
\label{finitely-many}
There are at most finitely many Erd\H{o}s-deep pairs of APs with lengths $k_1,k_2>3$.
\end{thm}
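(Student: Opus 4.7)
The plan is to combine the two bounds on $n$ developed in Sections~\ref{sec:n-upper} and~\ref{sec:n-lower}, using Lemma~\ref{three-sevenths} to control the auxiliary parameter $t$, and then observe that once $k_2$ is bounded, the remaining parameters must lie in a finite range.

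First I would invoke Lemma~\ref{three-sevenths}: since $k_1 \ge k_2 > 3$, we have $t \le \tfrac{3}{7}(k_2-1)$, so
$$k_2 - t \ge k_2 - \tfrac{3}{7}(k_2-1) = \tfrac{4k_2+3}{7}.$$
Substituting into Proposition~\ref{prop:n-lowerbound} produces the quadratic lower bound
$$n \ge \tfrac{(k_2-t)^2}{4} \ge \tfrac{(4k_2+3)^2}{196}.$$

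Next I would pair this with the linear upper bound $n \le 18k_2+36$ from Proposition~\ref{prop:n-upperbound}, giving
$$\tfrac{(4k_2+3)^2}{196} \le 18k_2 + 36.$$
This is a quadratic inequality in $k_2$ (equivalent to $16k_2^2 - 3504k_2 - 7047 \le 0$) with only finitely many positive integer solutions, so $k_2$ is bounded by an absolute constant.

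Finally I would chase the remaining parameters. With $k_2$ bounded, Proposition~\ref{prop:n-upperbound} bounds $n$. The integer $t$ lies in the finite range $[1,\tfrac{3}{7}(k_2-1)]$, and then \eqref{useful-param} determines $k_1$ uniquely from $k_2$ and $t$. The generator $g_2$ lies in the now-bounded set $\Z_n$. Hence only finitely many quadruples $(n,k_1,k_2,g_2)$ are possible, which yields the claim. There is no genuine obstacle here: the heavy lifting has been done in Propositions~\ref{prop:n-upperbound} and~\ref{prop:n-lowerbound}, and this theorem is merely their clean combination via Lemma~\ref{three-sevenths}.
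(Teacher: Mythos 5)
Your proposal is correct and follows essentially the same route as the paper: combine Lemma~\ref{three-sevenths} with Propositions~\ref{prop:n-upperbound} and \ref{prop:n-lowerbound} to trap $k_2$ between a quadratic lower bound and a linear upper bound on $n$, then note that $n$, $t$, $k_1$, and $g_2$ are all confined to finite ranges. The paper simply uses the slightly weaker bound $k_2 - t \ge \tfrac{4}{7}k_2$ to get $\tfrac{4}{49}k_2^2 \le n \le 18k_2+36$, yielding the explicit constants $k_2 \le 222$ and $n \le 4032$.
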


\begin{proof}
Suppose $\{\ap_n(1,k_1),\ap_n(g_2,k_2)\}$ is an Erd\H{o}s-deep pair with $k_1 \ge k_2 >3$. 
Putting together Propositions~\ref{prop:n-upperbound} and \ref{prop:n-lowerbound} gives
$(k_2-t)^2/4 \le n  \le 18k_2+36$.  Using the upper bound on $t$ from Lemma~\ref{three-sevenths}, we have $\tfrac{4}{49}k_2^2 \le n \le 18k_2+36$.  This implies $k_2 \le 222$ and $n \le 4032$.  We also have $g_2, k_1 \le n/2$.
\end{proof}

We next note that our lower bound on $n$ above also implies a universal bound on $t=k-k_1$.
This simply comes from a quadratic inequality on our parameters.  For the following, put $\beta:=n/k_1$.

\begin{prop}
\label{t-upper-bound}
If $n \ge (k_2-t)^2/4$, then $t \le 5\beta-\tfrac{3}{4}$.
\end{prop}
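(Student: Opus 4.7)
The plan is to combine the factored form of \eqref{useful-param}, which reads $(k_2-t)(k_2+t-1)=2tk_1$, with Lemma~\ref{three-sevenths} rearranged as $3k_2 \ge 7t+3$, and then to apply the hypothesis $(k_2-t)^2 \le 4n$ only at the very last step. Since the target $t \le 5\beta - \tfrac{3}{4}$ is equivalent to $(4t+3)k_1 \le 20n$, the strategy is to convert the lemma into a quadratic inequality in $k_2$ and $t$ that is calibrated precisely to this target.

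First I would aim to establish the inequality $(4t+3)(k_2+t-1) \le 10t(k_2-t)$. The natural attack is to expand the difference $10t(k_2-t)-(4t+3)(k_2+t-1)$, collect terms, and look for a factorisation; I expect the difference to simplify to $(2t-1)(3k_2-7t-3)$, a product of two nonnegative factors (the first because $t\ge 1$, the second by Lemma~\ref{three-sevenths}). With this inequality in hand, multiplying through by $k_2-t>0$ and using $(k_2-t)(k_2+t-1)=2tk_1$ on the left yields $2t(4t+3)k_1 \le 10t(k_2-t)^2$; cancelling $2t$ and invoking the hypothesis $(k_2-t)^2 \le 4n$ then gives $(4t+3)k_1 \le 20n$, which rearranges to $t \le 5\beta - \tfrac{3}{4}$.

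The case $k_1=k_2=3$ excluded by Lemma~\ref{three-sevenths} must be handled by direct inspection: \eqref{basic-equation} forces $t=1$, and $\beta \ge 1$ (since $n \ge k_1=3$) makes $5\beta-\tfrac{3}{4}\ge\tfrac{17}{4}$, so the inequality is trivially satisfied. The insight I expect to be the main obstacle is spotting the factorisation $14t^2-t-3=(2t-1)(7t+3)$, which explains why Lemma~\ref{three-sevenths}, with its seemingly coincidental constant $\tfrac{3}{7}$, interlocks exactly with the quadratic relation coming from \eqref{useful-param} to produce the target bound. Once this algebraic observation is noticed, the remaining calculations are routine.
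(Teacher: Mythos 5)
Your proof is correct. It uses exactly the same three ingredients as the paper's proof---the parameterization \eqref{useful-param} in the form $(k_2-t)(k_2+t-1)=2tk_1$, Lemma~\ref{three-sevenths}, and the hypothesis $(k_2-t)^2\le 4n$---but assembles them by a genuinely different mechanism. The paper argues by contradiction: assuming $t>5\beta-\tfrac{3}{4}$, it turns the hypothesis into the inequality $2\beta k_2(k_2-1)-2\beta t(t-1)-t(k_2-t)^2\ge 0$, views this as a quadratic in $k_2$ with negative leading coefficient $2\beta-t$, locates the roots $t$ and $u$, deduces $k_2\le u$, and then feeds that into Lemma~\ref{three-sevenths} to reach a contradiction. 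Your route is direct: the identity $10t(k_2-t)-(4t+3)(k_2+t-1)=(2t-1)(3k_2-7t-3)$ (which checks out, via $14t^2-t-3=(2t-1)(7t+3)$) compresses the paper's root-location step into a single nonnegativity statement, and multiplying by $k_2-t>0$ and substituting $2tk_1$ finishes the job. Two modest advantages of your version: it avoids the sign analysis of $2\beta-t$ (the paper's assertion that the leading coefficient is negative tacitly uses $5\beta-\tfrac{3}{4}\ge 2\beta$, i.e.\ $\beta\ge\tfrac{1}{4}$, which is true but unremarked), and it makes transparent why the constant $\tfrac{3}{7}$ from Lemma~\ref{three-sevenths} produces exactly the constant $5$ in the conclusion. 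Your separate treatment of the excluded case $k_1=k_2=3$ is also fine; the paper only disposes of it later, in the corollary.
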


\begin{proof}
Suppose for contradiction that $t > 5 \beta-\tfrac{3}{4}$.
Using \eqref{useful-param}, we have
$$n=\beta k_1 = \frac{\beta k_2(k_2-1)-\beta t(t-1)}{2t}.$$
So, from the assumed bound on $n$, we obtain the inequality
\begin{equation}
\label{k2-ineq}
2\beta k_2(k_2-1)-2\beta t(t-1)- t(k_2-t)^2 \ge 0.
\end{equation}
The right side is a quadratic in $k_2$ with leading coefficient $2\beta - t < 0$
and zeros at $k_2=t,u$, where $u=\frac{t^2+2\beta(t-1)}{t-2\beta}>t$.

By Lemma~\ref{three-sevenths}, we have $t \le \frac{3}{7}(k_2-1)$.  Also, \eqref{k2-ineq} implies $k_2 \le u$, so
$t \le \frac{3}{7}(u-1)$.  Simplifying this, we get
$t-2\beta \le \frac{3}{7}(t+2\beta -1)$, or equivalently
$t \le 5\beta-\tfrac{3}{4}$.
\end{proof}

\begin{cor}
If $\{\ap_n(g_1,k_1),\ap_n(g_2,k_2)\}$ is an Erd\H{o}s-deep pair with $\binom{k}{2}=\binom{k_1}{2}+\binom{k_2}{2}$, then $t:=k-k_1 \le 29$.
\end{cor}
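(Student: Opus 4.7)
The plan is to chain together Propositions~\ref{beta-upper-bound}, \ref{prop:n-lowerbound}, and \ref{t-upper-bound}. First, using the discussion of common divisors at the end of Section~2, I would reduce without loss of generality to the normalized form $g_1 = 1$; this reduction relies on Lemma~\ref{three-sevenths} and so is valid whenever $(k_1, k_2) \neq (3,3)$, and in the degenerate case $(3,3)$ we have $t = k - k_1 = 4 - 3 = 1$ directly.

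Because Proposition~\ref{beta-upper-bound} requires $k_2 > 3$, the next step is to dispose of the remaining small case $k_2 = 3$ by hand. Rewriting \eqref{basic-equation} as $(k - k_1)(k + k_1 - 1) = 6$ and using $k_1 \ge k_2 = 3$ together with $k > k_1$, the factor $k + k_1 - 1 \ge 6$ forces $k - k_1 = 1$, so $t = 1 \le 29$ is immediate.

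For the principal case $k_2 \ge 4$, Proposition~\ref{beta-upper-bound} gives $\beta := n/k_1 < 6$, while Proposition~\ref{prop:n-lowerbound} supplies the hypothesis $n \ge (k_2 - t)^2/4$ that is exactly what Proposition~\ref{t-upper-bound} needs. Applying the latter yields
\[
t \;\le\; 5\beta - \tfrac{3}{4} \;<\; 5 \cdot 6 - \tfrac{3}{4} \;=\; \tfrac{117}{4},
\]
and since $t$ is a positive integer we conclude $t \le 29$.

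The only real obstacle is making sure the hypotheses of the three invoked propositions are simultaneously in force, which is why I expect to split off the case $k_2 = 3$ at the start; beyond this bookkeeping, no further estimates are required and the arithmetic chain is immediate.
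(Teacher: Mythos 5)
Your proof is correct and follows essentially the same route as the paper: split off the small case, then chain Propositions~\ref{beta-upper-bound}, \ref{prop:n-lowerbound} and \ref{t-upper-bound} to get $t \le 5\beta - \tfrac{3}{4} < 29.25$. Your explicit factorization $(k-k_1)(k+k_1-1)=6$ showing that $k_2=3$ forces $k_1=3$ is a slightly more careful treatment of the degenerate case than the paper's (which only mentions $k_1=k_2=3$), and the normalization $g_1=1$ is harmless but not needed here.
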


\begin{proof}
If $k_1=k_2=3$, then $k=4$ and $t=1$.  Otherwise,  Propositions~ \ref{beta-upper-bound}, \ref{prop:n-lowerbound} and \ref{t-upper-bound} apply.  Substitute $\beta<6$ to obtain $t \le 5\beta-\frac{3}{4} < 29.25$.  Since $t$ is an integer, $t \le 29$.
\end{proof}

\subsection{Computer search}

Theorem~\ref{finitely-many} reduces our classification problem to a finite one, and the results above give a further reduction on the size of the parameter space to check.  We wrote a simple computer program in both Java and Python to confirm that Examples~\ref{edfam-13}, \ref{edfam-19} and \ref{edfam-31} give the only parameter tuples $(n,k_1,k_2)$ with $k_1,k_2>3$ producing Erd\H{o}s-deep pairs of APs.

The strategy of the computer search is to loop over $t \in \{1,2,\dots,29\}$ and then $k_2$ satisfying 
\begin{itemize}
\item
$(k_2-t)^2/4 \le 18k_2+36$, from Propositions~\ref{prop:n-upperbound} and \ref{prop:n-lowerbound}; and
\item
$t<\frac{3}{7}(k_2-1)$, from Lemma~\ref{three-sevenths}.
\end{itemize}
For each pair $(t,k_2)$, use \eqref{useful-param} to compute $k_1$ and $k$.  In the case that $k_1,k \in \Z$, we loop over $n$ in the range specified by its bounds in Sections~\ref{sec:n-upper}-\ref{sec:n-lower}, and then over $g_2 \in \{1,2,\dots,\lfloor n/2 \rfloor\}$.  
We remind the reader that $g_1$ can be assumed to equal $1$ after multiplication by $g_1^{-1} \pmod{n}$.  The loop on $g_2$ can be further reduced to
$k_2-t \le g_2 < 2k_1$; this is not crucial for the feasibility of the search, but the  second author's thesis \cite{thesis} can be consulted for further details.

Finally, we remark on a programming trick to speed up the search.  Since we expect most parameter lists $t,k_2,n,g$ to (spectacularly) fail to form the desired multiplicities, we break out of the multiplicity check as soon as either (a) more than $k-1$ distinct distances are found, or (b) some multiplicity exceeds $k-1$. With this and the additional bounds on $n$ and $g_2$, searching the needed region can be performed in less than 30 minutes on an average desktop computer.

\section{A construction for families of square size}
\label{construction}

In this section, we give a construction to show that, when $s$ is a square integer, there are infinitely many tuples $(k_1,\dots,k_s)$ of set sizes that admit an Erd\H{o}s-deep family of $s$ sets.  This sharply contrasts the situation we have studied for $s=2$, where only four tuples $(k_1,k_2)$ with $k_1 \ge k_2$ can yield an Erd\H{o}s-deep family.

Let $h$ and $\ell$ be positive integers.  We note an identity on triangular numbers:
\begin{equation}
\label{square-s-identity}
\binom{h \ell}{2} = \binom{h+1}{2} \binom{\ell}{2} + \binom{h}{2} \binom{\ell+1}{2}.
\end{equation}

The following construction 
produces an Erd\H{o}s-deep family of $h^2$ modular arithmetic progressions whose multiplicities combine according to \eqref{square-s-identity}.

\begin{thm}
\label{thm:square-s}
Let $h,\ell$ be integers with $\ell \ge 3$ and $h \ge 2$.  Suppose $D=\{g_1,\dots,g_h\} \subseteq \Z_n$ has the property that each $|jg_i|_n$ is distinct for $i\in [h]$ and $j \in [\ell]$.
For each $i$, let $\cF_i$ be the family consisting of $i$ copies of $\ap_n(g_i,\ell)$ together with $h-i$ copies of $\ap_n(g_i,\ell+1)$.
Then $\cF=\cup_{i=1}^h \cF_i$ is an Erd\H{o}s-deep family of $s=h^2$ sets and $\binom{k}{2}$ distances, where $k=h\ell$.
\end{thm}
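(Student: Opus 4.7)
The plan is to use the distinctness hypothesis to decouple the multiplicity count across the $h$ generators, so that each distance appearing in $\Delta \cF$ can be attributed to a unique $\cF_i$, and then to verify by direct arithmetic that the per-generator multiplicities exactly tile the integers $1,2,\dots,h\ell-1$.

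First I would observe that the longest AP in any $\cF_i$ has length $\ell+1$, so every distance in $\Delta \cF$ takes the form $|jg_i|_n$ for some $i \in [h]$ and $j \in \{1,\dots,\ell\}$; by hypothesis these $h\ell$ values are pairwise distinct. Consequently the multiplicity of $|jg_i|_n$ in $\Delta \cF$ depends only on $\cF_i$, and within any single copy $\ap_n(g_i,m)$ with $m \le \ell+1$, the distance $|jg_i|_n$ occurs exactly $m-j$ times whenever $1 \le j \le m-1$ (and otherwise not at all).

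Next, I would compute the multiplicity $\mu(i,j)$ of $|jg_i|_n$ in $\cF_i$ by summing over the $i$ copies of $\ap_n(g_i,\ell)$ and the $h-i$ copies of $\ap_n(g_i,\ell+1)$. A short calculation gives $\mu(i,j)=i(\ell-j)+(h-i)(\ell+1-j)=h(\ell-j+1)-i$ for $j \in \{1,\dots,\ell-1\}$, and $\mu(i,\ell)=h-i$. Setting $p:=\ell-j+1$ unifies both cases into the single formula $\mu=hp-i$, valid for $p \in [\ell]$ and $i \in [h]$.

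Finally, the map $(i,p) \mapsto hp-i$ is a bijection from $[h]\times[\ell]$ onto $\{0,1,\dots,h\ell-1\}$, since for each such $N$ the representation $N=hp-i$ with $i \in [h]$ is unique. The value $N=0$ (from $(i,p)=(h,1)$) corresponds to $|\ell g_h|_n$, which in fact does not appear in $\Delta\cF$ because $\cF_h$ has no length-$(\ell+1)$ AP; the remaining $h\ell-1$ positive values yield exactly the multiplicities $1,2,\dots,k-1$, each realized by a unique distance. As a consistency check, summing these multiplicities recovers $\binom{k}{2}$, matching the total distance count supplied by the identity \eqref{square-s-identity}. The argument is essentially bookkeeping, so no serious obstacle arises; the one subtle point is confirming that the $j=\ell$ boundary case is correctly absorbed into the general formula $\mu=hp-i$, which it is.
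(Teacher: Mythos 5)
Your proposal is correct and follows essentially the same route as the paper: attribute each distance to a unique generator via the distinctness hypothesis, compute the multiplicity $i(\ell-j)+(h-i)(\ell+1-j)=h(\ell+1-j)-i$, and observe that these values bijectively cover $\{0,1,\dots,h\ell-1\}$ with the zero value accounted for by the pair $(i,j)=(h,\ell)$. Your substitution $p=\ell+1-j$ and the explicit bijection check are just a slightly more detailed writing of the paper's final step.
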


\begin{proof}
Looking at each $\cF_i$ separately, the difference $g_i$ occurs $i(\ell-1)+(h-i)\ell = h \ell -i$ times within some AP.  Similarly, $|jg_i|_n$, $j \in \{1,\dots,\ell\}$, occurs as a distance in $\cF_i$
with total multiplicity $i(\ell-j)+(h-i)(\ell+1-j) = h(\ell+1-j)-i$. Note that this is zero when $(i,j)=(h,\ell)$, since $\cF_h$ simply consists of $h$ copies of $\ap_n(g_h,\ell)$ (and no APs of length $\ell+1$).

From our assumption on $D$, the set of multiplicities occurring in $\Delta \cF$ is
$$\{h(\ell+1-j)-i : (i,j) \in [h] \times [\ell] \setminus \{(h,\ell)\} \} = [k-1].$$
It follows that $\cF$ is an Erd\H{o}s-deep family with $\binom{k}{2}$ distances.
\end{proof}

We remark that the required condition on $D$ is in practice very easy to fulfill when $n$ is large relative to $h$ and $\ell$.  For a fixed $h$, each choice of $\ell$ gives a geometric infinite Erd\H{o}s-deep family of size $s=h^2$ in $\Z_n$ for sufficiently large $n$.  This is in sharp contrast with Theorem~\ref{main}, in which it is shown that (up to scaling and translation) only one infinite family exists for $s=2$.

We illustrate the construction with an example.

\begin{ex}
Let $n \ge 13$, and take $h=2$, and $\ell=3$.  Choose $D=\{g_1,g_2\}$ where $g_1=1$ and $g_2=4$.  The construction of Theorem~\ref{thm:square-s} builds the family
$$\cF = \{\{0,1,2,3\},\{0,1,2\},\{0,4,8\},\{0,4,8\}\},$$
which has distance multiset $\{1^5,2^3,3^1,4^4,8^2\}$.  Since $|8|_{13}=5$, this set has the additional feature that the distinct {\bf distances} (as well as the multiplicities) form the interval $\{1,2,\dots,5\}$ when $n=13$.  Such families are called \emph{Winograd-deep} and discussed further in \cite{DistGeom,thesis}.
\end{ex}

\section{Discussion}
\label{sec:discussion}
Our main result is that Erd\H{o}s-deep pairs of APs in $\Z_n$ exist if and only if $k_1=k_2=3$ for all $n \ge 7$, and in three sporadic cases $(k_1,k_2)=(6,4)$, $(7,6)$, and $(11,9)$, each of which exist for a single value of $n$.  This is the case of $s=2$ APs in an Erd\H{o}s-deep family.  By contrast, Theorem~\ref{thm:square-s} says that square values of $s$ admit constructions for infinitely many tuples $(k_1,\dots,k_s)$, and in particular this holds for $s=4$.  

The classification of Erd\H{o}s-deep triples of APs, (that is, the case $s=3$) is likely difficult to fully settle.  After a computer search of small parameters, we have a conjectured classification.

\begin{conj}
\label{s3}
An Erd\H{o}s-deep family of three APs of lengths $k_1 \ge k_2 \ge k_3$ in $\Z_n$ exists if and only if $(k_1,k_2,k_3) \in \{(4, 4, 3), (6, 3, 3) \}$, each for infinitely many $n$, or
\begin{align*}
(k_1,k_2,k_3) \in  \{& (6, 5, 3),  (6, 6, 4), (6, 6, 6), (7, 7, 3), (9, 4, 3), (8, 7, 4), (8, 8, 5), \\
&(10, 6, 4), (12, 4, 4), (13, 5, 3), (13, 7, 4), (16, 5, 4), (21, 6, 4) \},
\end{align*} 
each for a finite number of values of $n$. 
\end{conj}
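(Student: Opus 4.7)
The plan is to extend the framework of Section~\ref{proof} from pairs to triples, handling separately the ``only if'' direction (bounding parameters via the same combinatorial estimates, reducing to a finite search) and the ``if'' direction (exhibiting constructions for the two infinite families and verifying the thirteen sporadic tuples). For the bulk of the work, start from the identity $k(k-1) = \sum_{i=1}^{3} k_i(k_i-1)$ and set $t := k-k_1$. After the divisibility reduction at the end of Section~2 (namely $\gcd(n,g_1,g_2,g_3)=1$, and then $\gcd(n,g_1)=1$ so that $g_1=1$ after scaling), the aim is to bound $k_2$, $k_3$, and $n$ by absolute constants, unless $(k_1,k_2,k_3)$ is $(4,4,3)$ or $(6,3,3)$.

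First I would adapt the elementary bounds. The analogue of Lemma~\ref{three-sevenths} should bound $t$ by a fixed fraction of $k_2+k_3-2$, obtained by solving the three-AP identity for $k_1$ and imposing $k_1 \ge k_2$. The prime-divisibility argument used to force $\gcd(n,g_1)=1$ extends by a similar inclusion-exclusion: if a prime $p$ divides $n$ and two of the generators $g_i$, the shared distances with the third AP are confined to multiples of $p$, which is incompatible with the lower bound on $|\Delta A_1 \cap \Delta A_2 \cap \Delta A_3|$ forced by the Erd\H{o}s-deep property.

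Next I would generalize the bounds on $n$. The hitting-sequence machinery of Section~\ref{sec:ap-int} transfers verbatim, so the analogue of Proposition~\ref{beta-upper-bound} gives $n = O(k_1)$ by applying \eqref{ratio-bound} simultaneously to $\ap_n(g_2,k_2)$ and $\ap_n(g_3,k_3)$ against the ball of radius $k_1-1$. Proposition~\ref{prop:n-upperbound} generalizes by counting shared distances of small multiplicity in $\Delta \cF$ against Lemma~\ref{two-thirds-bound}, now tracking separately the contributions from $D(g_2,k_2,n)$ and $D(g_3,k_3,n)$; this should give $n = O(k_2+k_3)$. The pigeonhole lower bound of Proposition~\ref{prop:n-lowerbound} extends as well: a collision in a size-$m$ interval among $\ap_n(g_2,k_2) \cup \ap_n(g_3,k_3)$ produces a distance with total multiplicity at least $k_1+k_2+k_3-3m$, which must stay below $k$, yielding a lower bound of the form $n \ge c(k_2+k_3-2t)^2$ for some positive constant $c$.

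The main obstacle lies in the regime where $k_3$ is very small, which is exactly where the two infinite families live. When $k_3=3$, the pigeonhole lower bound from the shortest AP degrades to nearly nothing and $n$ is no longer forced to be bounded. Here one must either sharpen the lower bound by exploiting simultaneous collision-avoidance among pairs of APs, or enumerate configurations with $(k_2,k_3)$ in a small window and verify directly that only $(4,4,3)$ and $(6,3,3)$ admit Erd\H{o}s-deep realisations in arbitrarily large moduli. Once $(k_1,k_2,k_3)$ is bounded outside these families, a computer search along the lines of Section~3.4 should confirm the thirteen sporadic tuples; the parameter space is larger than for $s=2$ but should remain tractable given tight bounds. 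Finally, the constructive ``if'' direction for the two infinite families would require an elementary number-theoretic selection of $g_2,g_3$ in each sufficiently large $n$ that places the required distance multiplicities at distinct residues, in the spirit of the condition on $D$ appearing in Theorem~\ref{thm:square-s}.
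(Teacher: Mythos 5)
This statement is a \emph{conjecture} in the paper, not a theorem: the authors explicitly write that the classification for $s=3$ is ``likely difficult to fully settle'' and offer only a computer search of small parameters as evidence. So there is no proof in the paper to match your proposal against, and your proposal does not close the gap either --- it is a research outline whose central difficulty is acknowledged but left unresolved. Concretely, the step that fails is the finiteness reduction. For $s=2$, Propositions~\ref{prop:n-upperbound} and \ref{prop:n-lowerbound} sandwich $n$ between $c\,k_2^2$ and $O(k_2)$, which bounds everything. For $s=3$ the analogous lower bound degenerates exactly when $k_3$ (and possibly $k_2$) is small, and it \emph{must} degenerate, because $(6,3,3)$ and $(4,4,3)$ genuinely admit Erd\H{o}s-deep families for all sufficiently large $n$ (e.g.\ $\{\{0,1,2,3,4,5\},\{0,4,8\},\{0,4,8\}\}$ works for every $n\ge 15$). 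So no uniform sandwich can exist; one has to prove that \emph{exactly} these two tuples, and no other tuple with small $k_3$, escape to infinitely many moduli. Your proposal names this as ``the main obstacle'' and offers two alternatives (``sharpen the lower bound'' or ``enumerate configurations \dots and verify directly'') without carrying out either --- but that is precisely the open content of the conjecture.

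Two secondary gaps. First, the pair machinery does not transfer ``verbatim'': the inclusion--exclusion identity $|D|=k_1+k_2-k-1$ that powers Lemma~\ref{three-sevenths}, the reduction to $\gcd(n,g_1)=1$, and Propositions~\ref{beta-upper-bound} and \ref{prop:n-upperbound} all rely on every distance missing from $\Delta A_1$ being accounted for by a single second set. With three sets, shared distances distribute among three pairwise overlaps and a triple overlap, so the lower bound on $|\Delta A_1 \cap \Delta A_i|$ for any fixed $i$ weakens substantially, and the ratio arguments of Section~\ref{sec:ap-int} must be redone against the union $\ap_n(g_2,k_2)\cup\ap_n(g_3,k_3)$ rather than a single AP. Second, the ``if'' direction requires explicit witnesses for each of the thirteen sporadic tuples together with a determination of which $n$ realise them; the proposal defers this entirely to an unexecuted computation. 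As it stands, the proposal is a plausible plan of attack consistent with the paper's methods, but it does not constitute a proof of the statement, and the statement remains open.
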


The geometric infinite families in Conjecture~\ref{s3} are realized for $n \ge 15$ by 
$$\cF=\{\{0,1,2,3\},\{0,3,6,9\},\{0,1,2\}\},~\text{with}~\Delta \cF = \{1^5, 2^3, 3^4, 6^2, 9^1\},$$
and
$$\cF=\{\{0,1,2,3,4,5\},\{0,4,8\},\{0,4,8\}\},~\text{with}~\Delta \cF = \{1^5, 2^4, 3^3, 4^6,5^1, 8^2\}.$$

A potentially interesting relaxation is to consider families of $s \ge 3$ modular APs which achieve an interval of distance multiplicities of the form $\{a,a+1,\dots,b\}$ for $a \ge 2$.  We have not undertaken a systematic study, but we offer two examples of this occurrence.

\begin{ex}
Let $n=29$ and consider $\cF=\{\ap_n(1,14),\ap_n(4,5), \ap_n(12,3)\}$.  The first AP has distance $i$ occurring with multiplicity $14-i$ for $i=1,\dots,13$.  Distances from the other two APs combine to produce an interval of multiplicities from $2$ to $14$. Specifically, we have
$$\Delta\cF = \{13^2,11^3,10^4,9^5,12^6,7^7,6^8,8^9,5^{10},3^{11},2^{12},1^{13},4^{14}\}.$$
\end{ex}

\begin{ex}
Let $n=17$ and $\cF=\{\ap_n(1,7),\ap_n(3,6),\ap_n(2,5),\ap_n(4,3)\}$.  Distance multiplicities are shown in Table~\ref{tab:interval-example}
\begin{table}[htbp]
\begin{center}
\begin{tabular}{l|cccccccc}
distance & 1&2&3&4&5&6&7&8\\
\hline
mult in $\ap_{17}(1,7)$ & 6&5&4&3&2&1&&\\
mult in $\ap_{17}(3,6)$ & &1&5& &2&4& &3\\
mult in $\ap_{17}(2,5)$ & &4& &3& &2& &1\\
mult in $\ap_{17}(4,3)$ & & & & 2 & & & & 1\\
\hline
total in $\Delta \cF$ & 6 & 10 & 9 & 8 & 4 & 7 & &  5\\
\end{tabular}
\medskip
\caption{A family of four APs with an interval of nonzero distance multiplicities}
\label{tab:interval-example}
\end{center}
\end{table}
\end{ex}
Given any family $\cF$ of APs (or more general subsets) in $\Z_n$, it is possible to explore statistics on the multiplicities in $\Delta \cF$.  Erd\H{o}s-deep families are extremal in that they maximize the number of distinct multiplicities, assuming $\sum_{A \in \cF} \binom{|A|}{2}$ is a triangular number. Families can exist with a smaller number of still distinct distance multiplicities. Single sets with this property are considered in \cite{DistGeom}.  In general, if $n$ and the set sizes in $\cF$ are prescribed, it may be interesting to quantify how close the distance multiplicities can get to satisfying the Erd\H{o}s-deep condition.

We conclude by expanding briefly on the connection to music mentioned in the introduction.  Each set $A \subseteq \Z_n$ can be associated with a rhythm spanning $n$ units of time, \cite{Toussaint}.  The elements of $A$ correspond to the time stamps of note-hits, also known as `onsets'.  With this correspondence in mind, it is argued in \cite{DistGeom} that Erd\H{o}s-deep sets, and modular APs in general, lead to interesting rhythms.  Examples of rhythms from a variety of cultures are shown to have the properties of an Erd\H{o}s-deep set.  One of the motivations for our generalization to families of modular APs was the possibility that they could model multi-voiced rhythms.  A further discussion of this application, including a selection of compositions based on this structure, can be found in Chapter 6 of the second author's thesis, \cite{thesis}.


\section*{Acknowledgment}

This research is supported by NSERC Discovery Grant \#312595--2017.


\begin{thebibliography}{99}
\bibitem{Arnoux}
P. Arnoux, Sturmian sequences. \emph{Substitutions in dynamics, arithmetics and combinatorics}, 143--198, Lecture Notes in Math. 1794, Springer, Berlin, 2002.

\bibitem{biplane}
E. Brown, The fabulous $(11,5,2)$ biplane. \emph{Math. Mag.}  77 (2004), 87--100.

\bibitem{BGMMPS}
D. Burt, E. Goldstein, S. Manski, S.J. Miller, E.A.~Palsson, H. Suh, Crescent configurations. \emph{Integers} 16 (2016), Paper No. A38, 5 pp.

\bibitem{DistGeom}
E.D. Demaine et al., The distance geometry of music. \emph{Comput. Geom.} 42 (2009), 429--454.

\bibitem{ErdosDD}
P. Erd\H{o}s, On sets of distances of $n$ points.
\emph{Amer. Math. Monthly} 53 (1946), 248--250.

\bibitem{ErdosDeep1}
P. Erd\H{o}s, Problem E2938, \emph{Amer. Math. Monthly}, 89 (Apr 1982), 273.

\bibitem{ErdosDeep2}
P. Erd\H{o}s, Distances with specified multiplicities, \emph{Amer. Math. Monthly}, 96 (May 1989), 447.

\bibitem{thesis}
T. Gaede, Erd\H{o}s-deep families of arithmetic progressions, M.Sc.~thesis, University of Victoria, 2022.

\bibitem{GIS}
J. Garibaldi, A. Iosevich and S. Senger, \emph{The Erd\H{o}s distance problem}, Student Mathematical Library, 56, American Mathematical Society, Providence, RI, 2011.

\bibitem{GK}
L. Guth and  N.H.~Katz, On the Erd\H{o}s distinct distances problem in the plane. \emph{Ann. of Math.} (2) 181 (2015), 155--190.

\bibitem{IR}
A. Iosevich and M. Rudnev, Erd\H{o}s distance problem in vector spaces over finite fields. \emph{Trans. Amer. Math. Soc.}  359 (2007), 6127--6142.

\bibitem{Palasti}
I. Pal\'{a}sti, A distance problem of P. Erd\H{o}s with some further restrictions. \emph{Discrete Math.}  76 (1989), 155--156.

\bibitem{Toussaint}
G.T. Toussaint, \emph{The geometry of musical rhythm}, CRC Press, Boca Raton, FL, 2013.

\end{thebibliography}
\end{document}